\newtheorem{theorem}{Theorem}[section]
\newtheorem{lemma}[theorem]{Lemma}
\newtheorem{corollary}[theorem]{Corollary}
\newtheorem{proposition}[theorem]{Proposition}
\theoremstyle{definition}
\theoremstyle{remark}
\newtheorem{remark}[theorem]{Remark}
\numberwithin{equation}{section}
\newcommand{\R}{\mathbb R}
\newcommand{\Z}{\mathbb Z}
\def\e{\varepsilon}
\begin{document}

\title[Large time behavior of unbounded solutions]{Large time behavior of unbounded solutions of first-order Hamilton-Jacobi equations
in the whole space}

\author[G. Barles, O. Ley, T.-T. Nguyen, T. V. Phan]{Guy Barles \and Olivier Ley \and Thi-Tuyen Nguyen \and Thanh Viet Phan}

\address{LMPT, F\'ed\'eration Denis Poisson, Universit\'e Fran\c{c}ois-Rabelais Tours, France}
\email{Guy.Barles@lmpt.univ-tours.fr}

\address{IRMAR, INSA Rennes, France}
\email{olivier.ley@insa-rennes.fr}

\address{Dipartimento di Matematica, Universit\`a di Padova, Italy}
\email{ttnguyen@math.unipd.it}

\address{Faculty of Mathematics and Statistics, Ton Duc Thang University, Ho Chi Minh City, Vietnam}
\email{phanthanhviet@tdt.edu.vn}

\begin{abstract}
  We study the large time behavior of solutions of first-order convex Hamilton-Jacobi Equations
  of Eikonal type set in the whole space.
    We assume that the solutions may have arbitrary
    growth.
    A complete study of the structure of solutions of the ergodic problem
    is provided : contrarily to the periodic setting, the ergodic constant
    is not anymore unique, leading to different large time behavior for the solutions.
    We establish the ergodic behavior of the solutions of the Cauchy problem (i) when starting with a bounded from below
    initial condition and (ii) for some particular unbounded from below initial condition, two cases for which we have different ergodic constants which play a role.
    When the solution is not bounded from below, an example showing that the convergence
    may fail in general is provided.
 \end{abstract}

\subjclass[2010]{Primary 35F21; Secondary 35B40, 35Q93, 49L25}
\keywords{Hamilton-Jacobi equations; asymptotic behavior; ergodic problem;  unbounded solutions;
viscosity solutions}

\maketitle

\section{Introduction}

This work is concerned with the large time behavior
for unbounded solutions of the first-order Hamilton-Jacobi equation
\begin{eqnarray} \label{lc-cauchy}
\begin{cases}
 u_t(x,t) + H(x,Du(x,t)) = l(x),
\qquad {\rm in } \ \R^N \times (0, +\infty),\\
u(\cdot,0) = u_0(\cdot) \qquad {\rm in } \ \R^N,
\end{cases}
\end{eqnarray}
where $H\in  W_{\rm loc}^{1,\infty}(\R^N\times \R^N)$ satisfies
\begin{eqnarray}
  && \text{There exists $\nu\in C(\R^N),$ $\nu >0$ such that }
      H(x,p)\geq \nu (x) |p|, \label{Hcoerc}\\
   && 0 =H(x,0) < H(x,p) \ \ \text{ for $p\not= 0$},\label{Hnulle}\\
  && H(x,\cdot) \text{ is convex,} \label{Hconvex}\\
  &&  \text{There exist a constant $C_H>0$ and, for all $R>0$, a constant $k_R$ such that}\nonumber\\
   && |H(x,p)-H(y,q)|\leq k_R(1+|p|)|x-y|+C_H |p-q|,\label{Hlip}\\
   &&  \text{for all $|x|,|y|\leq R$, $p,q\in\R^N.$\nonumber}
\end{eqnarray}

We always assume $u_0, l\in C(\R^N)$ and
\begin{eqnarray}\label{bd-below}
l \geq 0 \quad\text{in $\R^N$}\; .
\end{eqnarray}
These assumptions are those used in the so-called Namah-Roquejoffre case
introduced in~\cite{nr99} in the periodic case, and in Barles-Ro\-que\-jof\-fre~\cite{br06} in the unbounded case.
They are not the most general but, for simplicity, we choose to state as above
since they are well-designed to encompass the classical Eikonal equation
\begin{equation}\label{eik}
u_t(x,t) + a(x)|Du(x,t)| = l(x), \quad \hbox{in  }\R^N\times (0,+\infty),
\end{equation}
where $a(\cdot)$ is a locally Lipschitz, bounded function such that $a(x)>0$ in $\R^N$.
The assumption~\eqref{Hcoerc} is a coercivity assumption, which may be replaced by~\eqref{Hcoerc-gene}.
We also may replace~\eqref{bd-below} by $l$ is bounded from below up to assume that
$H(x,0)- {\rm inf}_{\R^N}l=0$ in~\eqref{Hnulle}. 
\smallskip

Our goal is to prove that, under suitable additional assumptions,
there exists a unique viscosity solution $u$ of~\eqref{lc-cauchy} and that this solution satisfies
\begin{eqnarray*}
&& u(x,t)+ct \to v(x) \ \ \text{in $C(\R^N)$ as $t\to +\infty,$}
\end{eqnarray*}
where $(c,v)\in \R_+\times C(\R^N)$ is a solution to the ergodic problem
\begin{eqnarray}\label{lc-erprob}
&& H(x,Dv(x)) = l(x) + c \qquad \text{in } \R^N.
\end{eqnarray}

This problem has not been widely studied comparing to
the periodic case~\cite{fathi98, nr99, bs00, fs04, ds06, bim13, abil13} and
references therein. The main works in the unbounded setting
are Barles-Roquejoffre~\cite{br06} which extends
the well-known periodic result of Namah-Roquejoffre~\cite{nr99},
the works of Ishii~\cite{ishii08} and Ichihara-Ishii~\cite{ii09}.
A very interesting reference is the review of Ishii~\cite{ishii09}.
We will compare more precisely our results with the existing ones
below but let us mention that our main goal is
to make more precise the large time behavior for the Eikonal Equation~\eqref{eik}
in a setting where the equation is well-posed for solutions with
arbitrary growth, which brings delicate issues. Most of our results
were already obtained or are
close to results of~\cite{br06, ii09} but we use pure PDE
arguments to prove them without using Weak KAM methods and making
a priori assumptions
on the structure of solutions or subsolutions of~\eqref{lc-erprob}.
\smallskip

Changing $u(x,t)$ in $u(x,t)-\inf_{\R^N}\{l\}t$ allows to reduce to the case when $\inf_{\R^N}l=0$ and we are going to actually reduce to that case to simplify the exposure. Taking this into account, we use below the assumption
\begin{eqnarray}\label{lc-argmin-l}
&& \mathop{\rm lim\,inf}_{|x|\to +\infty} l(x) > \inf_{\R^N} (l)=0.
\end{eqnarray}
which is a compactness assumption in the sense that it implies
\begin{eqnarray*}
&& \mathcal{A}:= {\rm argmin}\, l 
  = \{x\in\R^N : l(x)=0\}
\ \text{ is a nonempty compact subset of $\R^N$.} 
\end{eqnarray*}
This subset corresponds to the Aubry set in the framework of Weak KAM theory.

Our first main result collects all the properties we obtain
for the solutions of~\eqref{lc-erprob}.

\begin{theorem}\label{lc-thm:ergodic} (Ergodic problem)\ \\
Assume that $0\leq l\in  C(\R^N)$ and
$H\in C(\R^N\times \R^N).$
\smallskip 

\noindent (i) If $H$ satisfies~\eqref{Hcoerc} and $H(x,0)=0$ for all $x\in \R^N$
then, for all $c\geq 0,$ there exists a solution
$(c,v) \in \R_+ \times W_{\rm loc}^{1,\infty}(\R^N)$ of~\eqref{lc-erprob}.
\smallskip 

\noindent 
(ii) Assume that~\eqref{lc-cauchy} satisfies a comparison principle in $C(\R^N \times [0,+\infty)).$
If $(c,v)$ and $(d,w)$ are solutions of~\eqref{lc-erprob} with $\sup_{\R^N}|v-w| <\infty,$ 
then $c=d$.
\smallskip 

\noindent
(iii) If $\mathcal{A}\not=\emptyset$ and $H$ satisfies~\eqref{Hcoerc} and~\eqref{Hnulle},
then there exists a solution $(c,v) \in \R_+ \times W_{\rm loc}^{1,\infty}(\R^N)$ of~\eqref{lc-erprob}
with $c= 0$ and $v\geq 0.$
If, in addition,
\begin{eqnarray}\label{Hbornesup}
&& \text{$H(x,p)\leq m(|p|)$ for some increasing function $m\in C(\R_+,\R_+)$,}
\end{eqnarray}
and $\mathcal{A}$ satisfies~\eqref{lc-argmin-l},
then $v(x)\to +\infty$ as $|x|\to \infty.$
\smallskip 

\noindent
(iv) Let $c>0.$
If $H$ satisfies~\eqref{Hbornesup}  then any solution
$(c,v)$ of~\eqref{lc-erprob} is unbounded from below.
If $H$ satisfies $H(x,0)=0$ and~\eqref{Hconvex} and $(c,v),$ $(c,w)$
are two solutions of~\eqref{lc-erprob} with $v(x)-w(x)\to 0$ as $|x|\to\infty,$
then $v=w.$
\end{theorem}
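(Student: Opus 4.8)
I would treat the four parts in turn. Parts (i) and the existence statement in (iii) are soft (Perron's method + a coercivity estimate); parts (ii), the refinement in (iii) and (iv) all revolve around the same difficulty --- the solutions may grow and, by (iv), be unbounded from below --- which I expect to be the real obstacle and which I would handle through the control-theoretic (Lax--Oleinik / dynamic programming) representation of viscosity solutions rather than through scaling or comparison arguments that require boundedness.

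\textbf{Part (i), and existence in (iii).} Run Perron's method for \eqref{lc-erprob}. By \eqref{bd-below} and $c\ge0$, the constant $0$ is a subsolution since $H(x,0)=0\le l(x)+c$. For a supersolution take $\bar v(x)=\psi(|x|)$ with $\psi\in C^1$ increasing and $\psi'$ chosen, on each $[0,R]$, so large that $\nu(x)\psi'(|x|)\ge l(x)+c$ for $|x|\le R$ (possible since $\nu$ is continuous and positive and $l$ continuous); then \eqref{Hcoerc} gives $H(x,D\bar v)\ge\nu(x)\psi'(|x|)\ge l(x)+c$, and $\bar v\ge0$. The Perron envelope of the subsolutions lying between $0$ and $\bar v$ is a priori a discontinuous solution; but \eqref{Hcoerc} forces any subsolution $u$ to obey $|Du|\le (l+c)/\nu$ locally, hence to be locally Lipschitz, so this envelope lies in $W^{1,\infty}_{\rm loc}(\R^N)$, is continuous, and is a genuine solution, with $v\ge0$. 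Choosing $c=0$ (legitimate since \eqref{Hnulle} contains $H(x,0)=0$) yields the solution $(0,v)$ with $v\ge0$ required in the first half of (iii).

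\textbf{Part (ii).} Suppose $c\ne d$, say $d>c$, and set $M:=\sup_{\R^N}|v-w|<\infty$. The functions $\widetilde w(x,t):=w(x)-dt$ and $\widetilde v(x,t):=v(x)-ct+M$ are continuous on $\R^N\times[0,+\infty)$ and both solve the PDE in \eqref{lc-cauchy}: $\widetilde w_t+H(x,D\widetilde w)=-d+H(x,Dw)=-d+l(x)+d=l(x)$ by \eqref{lc-erprob}, and likewise for $\widetilde v$; moreover $\widetilde w(\cdot,0)=w\le v+M=\widetilde v(\cdot,0)$. The assumed comparison principle gives $\widetilde w\le\widetilde v$ everywhere, i.e. $(d-c)t\le (v-w)(x)+M\le2M$ for all $t\ge0$, which is absurd. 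Hence $c=d$.

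\textbf{Refinement in (iii) and first assertion of (iv).} Both rest on turning the equation into an eikonal supersolution plus Lax--Oleinik. For (iii): by \eqref{lc-argmin-l}, $\delta:=\tfrac12\liminf_{|x|\to\infty}l(x)>0$; pick $R_0$ with $l\ge\delta$ on $\Omega:=\{|x|>R_0\}\supseteq\mathcal A$. If $\phi$ touches $v$ from below at $x_0\in\Omega$, then by \eqref{Hbornesup} $m(|D\phi(x_0)|)\ge H(x_0,D\phi(x_0))\ge l(x_0)\ge\delta$, so $|D\phi(x_0)|\ge\delta_0$ for a fixed $\delta_0>0$; thus $v$ is a viscosity supersolution of $|Dv|=\delta_0$ on $\Omega$, and $v\ge0$. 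The dynamic programming inequality for this eikonal equation gives, for $0\le t<|x|-R_0$ and $x\in\Omega$, a Lipschitz path $\gamma$ with $|\dot\gamma|\le1$, $\gamma(0)=x$, $\gamma([0,t])\subset\Omega$ and $v(x)\ge\delta_0 t+v(\gamma(t))\ge\delta_0 t$; letting $t\uparrow|x|-R_0$ yields $v(x)\ge\delta_0(|x|-R_0)\to+\infty$. For the first assertion of (iv), suppose $(c,v)$ with $c>0$ has $v\ge-M$; for small $\eta>0$ the coercive function $x\mapsto v(x)+\eta|x-x_0|^2$ attains a minimum at $y_\eta$, and testing the supersolution inequality with $\phi(x)=v(y_\eta)+\eta|y_\eta-x_0|^2-\eta|x-x_0|^2$ gives $m(2\eta|y_\eta-x_0|)\ge l(y_\eta)+c\ge c$, so $2\eta|y_\eta-x_0|\ge\delta_1$ for a fixed $\delta_1>0$; then $v(y_\eta)\le v(x_0)-\eta|y_\eta-x_0|^2\le v(x_0)-\delta_1^2/(4\eta)\to-\infty$, contradicting $v\ge-M$.

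\textbf{Second assertion of (iv).} Let $(c,v),(c,w)$ solve \eqref{lc-erprob} with $(v-w)(x)\to0$ as $|x|\to\infty$, and let $L(x,\cdot)$ be the convex conjugate of $H(x,\cdot)$ (available by \eqref{Hconvex}; $L\ge-H(x,0)=0$). Viewed as time-independent functions, $v,w$ are viscosity solutions of $u_t+H(x,Du)=l(x)+c$, so the Lax--Oleinik representation provides, for each $t>0$, a Lipschitz curve $\eta=\eta_t$ with $\eta(0)=x$ and
\[
v(x)\ \ge\ \int_0^t\!\big[L(\eta,-\dot\eta)+l(\eta)+c\big]\,ds+v(\eta(t))-\tfrac1t,\qquad
w(x)\ \le\ \int_0^t\!\big[L(\eta,-\dot\eta)+l(\eta)+c\big]\,ds+w(\eta(t)),
\]
the second holding for any curve (it is the easy subsolution inequality). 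Subtracting, $w(x)\le v(x)+\tfrac1t+(w-v)(\eta_t(t))$. Since $L\ge0$, $l\ge0$ and $c>0$ the integral is $\ge ct$, hence $v(\eta_t(t))\le v(x)+1-ct\to-\infty$ as $t\to+\infty$; as $v$ is continuous, hence bounded below on compacts, $|\eta_t(t)|\to+\infty$, so $(w-v)(\eta_t(t))\to0$ and therefore $w\le v$. By symmetry $v\le w$, so $v=w$. The hard point throughout (ii)--(iv) is precisely that, the solutions being unbounded (from below, by (iv)), one cannot pass to maxima/minima or rescale into strict sub/supersolutions; the representation formula sidesteps this by following near-optimal trajectories, which are pushed to infinity when $c>0$ (and spend long times where $l$ is large when $c=0$), converting the decay of $v-w$ --- or the growth of $l$ --- at infinity into the required inequalities.
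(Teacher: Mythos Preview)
Your arguments for (ii), the growth statement in (iii), and the first half of (iv) are correct and close in spirit to the paper's (the paper phrases the last two via the ``viscosity decrease principle'' rather than paths or a quadratic penalisation, but the mechanism is the same). For (i) and the existence in (iii) you run a global Perron argument between $0$ and a radial barrier, whereas the paper solves Dirichlet problems on balls $B(0,R)$ and passes to the limit after normalising; your route is shorter and delivers $v\ge0$ for free, but as written the barrier $\bar v(x)=\psi(|x|)$ with $\psi'(0)>0$ is not a viscosity supersolution at the origin: $0$ lies in its subdifferential there, and $H(0,0)=0<l(0)+c$ whenever $l(0)+c>0$. This is repairable (e.g.\ use affine barriers on each ball as the paper does), but it is a gap as stated.

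The substantive issue is the second assertion of (iv). Your argument hinges on the Lax--Oleinik identity for the solution $v$: the existence of a near-optimal curve $\eta_t$ with $v(x)\ge\int_0^t[L+l+c]\,ds+v(\eta_t(t))-\tfrac1t$. The companion inequality for $w$ (valid along \emph{every} curve) is indeed the soft subsolution direction; but the displayed inequality for $v$ is equivalent to $v(x)\ge V(x,t)$, where $V$ is the value function with initial datum $v$, and that is a comparison statement for the Cauchy problem. Part (iv) assumes only $H(x,0)=0$ and convexity---no \eqref{Hlip}, no comparison principle---so this step is unjustified (the existence of backward calibrated curves is a genuine weak-KAM input requiring more structure than you have). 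Your closing diagnosis, that ``one cannot pass to maxima/minima or rescale into strict sub/supersolutions'', is precisely what the paper \emph{does}: since $(v-w)(x)\to0$ at infinity, the maximum of $\mu v-w$ (for $\mu<1$ close to $1$) is attained inside a fixed ball; convexity together with $H(x,0)=0$ gives $H(x,\bar p)\le\mu H(x,\bar p/\mu)$, and a standard doubling of variables yields $0\le(\mu-1)(l(x_0)+c)$, which is strictly negative because $c>0$ forces $l(x_0)+c>0$. No representation formula is needed, and this is exactly the ``pure PDE'' argument the paper advertises over the control-theoretic approaches of \cite{br06,ii09}.
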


\noindent
The situation is completely different with respect to the periodic setting
where there is a unique ergodic constant (or critical value) for which~\eqref{lc-erprob}
has a solution (e.g., Lions-Papanicolaou-Varadhan~\cite{lpv86} or Fathi-Siconolfi~\cite{fs05}). We recover some results of
Barles-Roquejoffre~\cite{br06} and Fathi-Maderna~\cite{fm07}, see Remark~\ref{disc-ergo}
for a discussion. As far as the case of unbounded solutions of elliptic equations is concerned,
let us mention the recent work of Barles-Meireles~\cite{bm16} and the references therein.
\smallskip

Coming back to~\eqref{lc-cauchy}, when $H$ satisfies~\eqref{Hlip},
we have a comparison principle by a ``finite speed of propagation'' type argument, which allows to compare sub- and supersolutions without growth condition (\cite{ishii84, ley01} and Theorem~\ref{lc-compathm}).
It follows that there exists a unique continuous solution defined for all time
as soon as there exist a sub- and supersolution.

\begin{proposition}\label{lc-exis-uniq-cauchy} Assume
that $l\geq 0$ and $H$ satisfies~\eqref{Hcoerc} and~\eqref{Hlip}.
Let $u_0\in C(\R^N)$ and $c\geq 0.$
\smallskip

\noindent (i) There exists a smooth supersolution $(c,v^+)$ of~\eqref{lc-erprob}
satisfying $u_0\leq v^+$ in $\R^N.$~~~~
\smallskip

\noindent (ii) If there exists a subsolution $(c,v^-)$ of~\eqref{lc-erprob}
satisfying $v^-\leq u_0$ in $\R^N,$ then there exists a unique viscosity
solution $u\in C(\R^N\times [0,+\infty))$ of~\eqref{lc-cauchy} such that
\begin{eqnarray}\label{encadrement-u}
&& v^-(x)\leq u(x,t)+ct\leq v^+(x) \qquad\text{for all $(x,t)\in\R^N\times [0,+\infty).$}    
\end{eqnarray}
\end{proposition}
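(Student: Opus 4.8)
The plan is to build the solution of the Cauchy problem by Perron's method, using the sub- and supersolution provided by part (i) and the hypothesis of part (ii) as the barriers, and then to invoke the comparison principle from Theorem~\ref{lc-compathm} for uniqueness.

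First I would prove part (i). The natural construction of a smooth supersolution $(c,v^+)$ of~\eqref{lc-erprob} lying above $u_0$ exploits the coercivity~\eqref{Hcoerc} together with the Lipschitz-type bound~\eqref{Hlip}: since $l\geq 0$ and $c\geq 0$, it suffices to find a smooth $v^+$ with $H(x,Dv^+(x))\geq l(x)+c$. Using~\eqref{Hlip} at, say, $q=0$ and $H(y,0)=0$ (which follows from~\eqref{Hnulle}, itself implied here — or one simply uses that $l$ and $c$ have controlled growth), one gets $H(x,p)\geq \nu(x)|p|$ and $\nu$ locally bounded below, so a radially increasing smooth function of $|x|$ with large enough slope, dominating $u_0$ (which is continuous, hence has at most some fixed local growth; one enlarges the slope on each annulus), will do. Concretely I would take $v^+(x)=\phi(|x|)$ with $\phi$ smooth, convex, increasing, with $\phi'(r)$ chosen on each dyadic annulus to exceed both $(l(x)+c)/\nu(x)$ there and the local oscillation of $u_0$; a standard smoothing/gluing argument produces a genuine $C^1$ (or smooth) global supersolution. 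This is essentially the construction already used in~\cite{br06} and is routine.

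For part (ii), given the subsolution $v^-$ with $v^-\leq u_0\leq v^+$, I would run Perron's method on the Cauchy problem~\eqref{lc-cauchy}: set $\underline{u}(x,t)=v^-(x)-ct$ and $\overline{u}(x,t)=v^+(x)-ct$, check that these are respectively a sub- and a supersolution of~\eqref{lc-cauchy} taking the correct value — or being appropriately ordered with $u_0$ — at $t=0$ (here one needs $v^-\leq u_0\leq v^+$, which is exactly what is assumed/constructed), and define $u$ as the supremum of all subsolutions of~\eqref{lc-cauchy} lying between $\underline u$ and $\overline u$. The usual Perron argument shows the upper and lower semicontinuous envelopes $u^*$, $u_*$ are respectively a sub- and a supersolution; the comparison principle of Theorem~\ref{lc-compathm}, which applies here because~\eqref{Hlip} gives finite-speed-of-propagation and hence comparison without any growth restriction, then forces $u^*\leq u_*$, so $u=u^*=u_*$ is the desired continuous viscosity solution. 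The sandwich $v^-(x)-ct\leq u(x,t)\leq v^+(x)-ct$, i.e.~\eqref{encadrement-u}, is immediate from the construction. Uniqueness is again comparison applied to two solutions.

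The main obstacle is a slightly delicate point rather than a deep one: verifying that Theorem~\ref{lc-compathm}'s comparison principle genuinely applies to the pair $(u^*, u_*)$ here, since these functions are a priori only locally bounded (they are trapped between $v^-(x)-ct$ and $v^+(x)-ct$, which are unbounded). One must check that the finite-speed-of-propagation comparison result is stated for exactly this class — continuous, or merely semicontinuous, sub/supersolutions with no growth condition — and that the barriers $v^\pm$ themselves are admissible as competitors; if Theorem~\ref{lc-compathm} is stated for $C(\R^N\times[0,+\infty))$ solutions, a small additional argument (localize in space, use the barriers to control behavior at infinity) is needed to upgrade to the semicontinuous envelopes. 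Also, one should be careful that the initial condition is attained continuously: this follows from the standard barrier argument at $t=0$ using $\underline u$, $\overline u$ and the continuity of $u_0$.
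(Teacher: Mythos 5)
Your proposal is correct and follows essentially the same route as the paper: a radial, increasing smooth supersolution built from the coercivity~\eqref{Hcoerc} and dominating $u_0$ for (i), and Perron's method between the barriers $v^-(x)-ct$ and $v^+(x)-ct$ combined with the growth-free comparison principle of Theorem~\ref{lc-compathm} for (ii); note that Theorem~\ref{lc-compathm} is already stated for $USC$/$LSC$ sub- and supersolutions, so the semicontinuous-envelope issue you flag is harmless. The only cosmetic difference is that part (i) needs nothing beyond~\eqref{Hcoerc} (your appeal to~\eqref{Hlip} and $H(y,0)=0$ is superfluous), exactly as in the paper's construction $v^+(x)=f_0(|x|)+\int_0^{|x|}f_1(s)\,ds$.
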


\noindent
Notice that the existence of a subsolution is given by~\eqref{Hnulle} for instance.
\smallskip

We give two convergence results depending on the critical value $c=0$ or $c> 0.$

\begin{theorem}\label{lc-tm:longtime} (Large time behavior starting
with bounded from below initial data)
Assume~\eqref{Hcoerc}-\eqref{Hnulle}-\eqref{Hconvex}-\eqref{Hlip}, $l\geq 0,$ and \eqref{lc-argmin-l}. Then, for every bounded from below initial data
$u_0$,
the unique viscosity solution $u$ of~\eqref{lc-cauchy} satisfies
\begin{eqnarray}\label{cv-thm-main}
u(x,t) \mathop{\to}_{t\to +\infty} v(x)
\quad \text{locally uniformly in $\R^N,$}
\end{eqnarray}
where $(0,v)$ is a solution to~\eqref{lc-erprob}.
\end{theorem}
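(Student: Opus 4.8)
The plan is to follow the classical "Namah--Roquejoffre" strategy adapted to the unbounded setting, using the half-relaxed limits together with the strong maximum-principle-type information coming from the set $\mathcal A$. First I would normalize by setting $w(x,t) := u(x,t)$ (here $c=0$, so no shift is needed) and recording from Proposition~\ref{lc-exis-uniq-cauchy} that $w$ is trapped between $v^-$ and $v^+$, two sub- and supersolutions of~\eqref{lc-erprob}; in particular the family $\{w(\cdot,t)\}_{t\ge 0}$ is bounded in $C(\R^N)$ locally uniformly, and the coercivity~\eqref{Hcoerc} gives a local gradient bound, hence local equi-Lipschitz continuity in $x$, while the equation and~\eqref{Hbornesup}-type bounds give equi-continuity in $t$. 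Next I would prove monotonicity of a suitable quantity: using $l\ge 0$ and $H(x,0)=0$ one checks that $t\mapsto w(x,t)$ is essentially nonincreasing along the Aubry set and, more usefully, that $m(t):=\inf_{s\ge 0}\sup_{\R^N}(w(\cdot,t+s)-w(\cdot,s))$ or a comparison-based argument with the time-translates $w(x,t+h)$ yields that $w(\cdot,t)$ converges, at least along subsequences, to some function.

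The core of the argument is then the half-relaxed limits $\underline w(x) := \liminf_{t\to\infty}{}_* w(x,t)$ and $\overline w(x) := \limsup_{t\to\infty}{}^* w(x,t)$: by the stability of viscosity solutions, $\overline w$ is a subsolution and $\underline w$ a supersolution of~\eqref{lc-erprob} with $c=0$, and both are finite and continuous by the local estimates above. One has $\underline w \le \overline w$ trivially, so the whole point is the reverse inequality $\overline w \le \underline w$, which forces convergence (locally uniformly, by Dini-type arguments given the equi-continuity). To get it I would first establish that the convergence holds \emph{on} $\mathcal A$: on $\mathcal A$ we have $l=0$, $H(x,0)=0$, and the coercivity plus convexity let one run the Namah--Roquejoffre argument showing $w(x,t)$ is monotone (nonincreasing) in $t$ for $x\in\mathcal A$, hence converges there to $\underline w=\overline w =: \varphi$ on $\mathcal A$. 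Then I would use a comparison principle argument: both $\overline w$ and $\underline w$ solve~\eqref{lc-erprob} with $c=0$, they agree on $\mathcal A$, and $\mathcal A$ "controls" the equation in the sense that any subsolution is below the solution determined by its values on $\mathcal A$ (this is the PDE substitute for the representation formula $v(x)=\inf_{y\in\mathcal A}\{\varphi(y)+d(x,y)\}$ with $d$ the optimal-control/Finsler distance associated with $H$). Concretely, one shows $\overline w(x) \le \inf_{y\in\mathcal A}\{\varphi(y) + L(x,y)\} \le \underline w(x)$, where $L$ is the intrinsic distance; the first inequality because $\overline w$ is a subsolution agreeing with $\varphi$ on $\mathcal A$, the second because $\underline w$ is a supersolution with the same boundary values. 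The compactness assumption~\eqref{lc-argmin-l} and $v^+(x)\to\infty$ (Theorem~\ref{lc-thm:ergodic}(iii)) are what keep everything finite and make the infimum over the compact set $\mathcal A$ attained.

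The main obstacle I anticipate is precisely making the "$\mathcal A$ controls the solution" step rigorous with pure PDE (comparison) arguments rather than Weak KAM / representation-formula machinery — i.e.\ proving the two distance inequalities directly from sub/supersolution properties, in a framework where solutions are genuinely unbounded and the classical comparison principle (Theorem~\ref{lc-compathm}) must be invoked carefully on the difference $\overline w - \underline w$ restricted suitably near $\mathcal A$. A secondary technical point is justifying that the convergence on $\mathcal A$ truly propagates to local uniform convergence on all of $\R^N$: one needs the equi-Lipschitz bound from~\eqref{Hcoerc} to upgrade pointwise convergence of $\underline w = \overline w$ to local uniform convergence, and one must handle the boundedness-from-below hypothesis on $u_0$ to ensure the lower half-relaxed limit does not drift to $-\infty$ (this is where boundedness from below of $u_0$ is essential, as the final example in the paper shows convergence can fail without it). I would structure the write-up as: (1) a priori bounds and compactness; (2) half-relaxed limits are sub/supersolutions; (3) convergence on $\mathcal A$ via monotonicity; (4) the two distance inequalities giving $\overline w \le \underline w$; (5) conclude~\eqref{cv-thm-main}.
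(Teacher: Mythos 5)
Your overall skeleton (half-relaxed limits, sub/supersolution properties, monotone convergence on $\mathcal A$, then a comparison controlled by $\mathcal A$) is the right Namah--Roquejoffre strategy and matches the paper's starting point, but there are two genuine gaps. The first is your claim in step (1) that coercivity gives local equi-Lipschitz continuity in $x$ and equi-continuity in $t$ for the family $\{u(\cdot,t)\}_{t\ge 0}$. This is exactly what is \emph{not} available here: a space gradient bound from~\eqref{Hcoerc} requires a bound on $u_t$, which in the periodic case (or for Lipschitz $u_0$ as in~\cite{br06}) comes from comparing $u(\cdot,\cdot+h)$ with $u\pm Ch$, but fails for merely continuous initial data with arbitrary growth; the paper explicitly flags the absence of bounds on $|u_t|,|Du|$ uniform in $t$ as the main difficulty. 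Without equicontinuity, your step (3) is compromised in two ways: the a.e.\ monotonicity argument $u_t\le l$ on $\mathcal A$ needs local Lipschitz regularity of $u$, and, more seriously, even pointwise convergence $u(x,t)\to\varphi(x)$ on $\mathcal A$ does not identify the values of $\overline w$ and $\underline w$ on $\mathcal A$ (the relaxed limits involve $x_n\to x$, so $\overline w$ can exceed $\varphi$ on $\mathcal A$ without local uniform convergence near $\mathcal A$). The paper repairs precisely this by introducing sup- and inf-convolutions in time, $u^\e$ and $u_\e$, which are Lipschitz in $t$ locally uniformly, hence (via the equation) locally Lipschitz in $x$ uniformly in large $t$, proving uniform convergence of $u^\e,u_\e$ on $\mathcal A$ by Dini, and then showing $\phi_\e=\phi^\e=\phi$ before running the comparison on $\overline{u^\e},\underline{u_\e}$. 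Your write-up contains no substitute for this regularization.

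The second gap is step (4), which you yourself identify as the main obstacle but do not resolve: the inequality $\inf_{y\in\mathcal A}\{\varphi(y)+L(x,y)\}\le\underline w(x)$ for the supersolution is a comparison statement on the unbounded domain $\R^N\setminus\mathcal A$ and needs a proof (boundedness from below of $\underline w$ and strict positivity of $l$ outside a compact set make it plausible, but it is precisely the representation-formula/Weak KAM machinery the theorem is meant to avoid). The paper's route is different and purely PDE: it first proves Lemma~\ref{IneqLT} (via the Barron--Jensen minimum of $v_1$ with the strict subsolution $C_1+\eta_\mathcal{A}t$ outside $B(0,R_\mathcal{A})$), which yields $\min\{\overline u,C\}-\underline u\to-\infty$ at infinity, and then applies Ishii's comparison for eikonal-type equations (using that $0$ is a strict subsolution outside $\mathcal A$) to localize the maximum of $\min\{\overline{u^\e},C\}-\underline{u_\e}$ on $\mathcal A$, letting $C\to+\infty$ afterwards. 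You have no analogue of Lemma~\ref{IneqLT} nor of the truncation $\min\{\cdot,C\}$, and the final reduction from bounded-from-below $u_0$ (the paper first treats bounded $u_0$, then truncates $u_0^C=\min\{u_0,C\}$ and uses Barron--Jensen again) is also absent. As written, the proposal is a reasonable plan but the two steps on which the whole proof hinges --- uniform control near $\mathcal A$ without a priori Lipschitz bounds, and the comparison forcing $\overline w\le\underline w$ --- are not established.
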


\begin{theorem}\label{lc-tm:longtime-infty}
  (Large time behavior starting from particular unbounded from below initial data)
  Assume~\eqref{Hcoerc}-\eqref{Hnulle}-\eqref{Hconvex}-\eqref{Hlip}, $l\geq 0$
  and let $(c,v)$ be a solution of~\eqref{lc-erprob} with $c> 0.$
  If there exists a subsolution $(0,\psi)$ of~\eqref{lc-erprob} such that the initial
    data $u_0$ satisfies
\begin{eqnarray}\label{u0-v-to0}
&& \min \{\psi(x),u_0(x)\}- \min \{\psi(x),v(x)\} \mathop{\to} 0 \quad \text{as $|x|\to +\infty$,} 
\end{eqnarray}
then there exists a unique viscosity solution $u$ of~\eqref{lc-cauchy} and 
$u(x,t)+ct \mathop{\to} v(x)$
locally uniformly in $\R^N$ as $t\to +\infty.$
\end{theorem}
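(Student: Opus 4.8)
The plan is to construct $u$ via Proposition~\ref{lc-exis-uniq-cauchy}, then to analyse the half-relaxed limits of $\bar u:=u+ct$ as $t\to+\infty$ and show they both equal $v$; assumption~\eqref{u0-v-to0} serves only to pin down these limits at infinity, and Theorem~\ref{lc-thm:ergodic}(iv) to propagate this to $\R^N$.

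\emph{Existence.} Proposition~\ref{lc-exis-uniq-cauchy}(i) gives a supersolution above $u_0$, so it remains to find a subsolution $(c,v^-)$ of~\eqref{lc-erprob} with $v^-\le u_0$. As $\psi$ is a subsolution of~\eqref{lc-erprob} at level $0$ and $l\le l+c$, it is also one at level $c$, and so is $v$ (being a solution). By~\eqref{Hconvex}, the minimum of two subsolutions of~\eqref{lc-erprob} at level $c$ is again one --- a standard consequence of convexity (e.g.\ via the $1$-Lipschitz characterization of subsolutions with respect to the intrinsic length metric of~\eqref{lc-erprob}). Hence $\min\{\psi,v\}$ is a subsolution at level $c$; by~\eqref{u0-v-to0}, $\min\{\psi,v\}\le u_0+o(1)$ as $|x|\to+\infty$, so $v^-:=\min\{\psi,v\}-C$ lies below $u_0$ on $\R^N$ for $C$ large and is still a subsolution at level $c$. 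Proposition~\ref{lc-exis-uniq-cauchy}(ii) then produces the unique $u$, with $\min\{\psi,v\}-C\le\bar u\le v^+$.

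\emph{Relaxed limits.} The function $\bar u$ solves $\bar u_t+H(x,D\bar u)=l+c$ and is trapped between two time-independent functions, hence locally bounded with $t\mapsto\bar u(x,t)$ bounded. Let $\bar v,\underline v$ be its upper and lower half-relaxed limits as $t\to+\infty$; by the standard stability argument (the boundedness of $\bar u$ controlling $\bar u_t$ in the relaxed sense), $\bar v$ is a subsolution and $\underline v$ a supersolution of~\eqref{lc-erprob} at level $c$, with $\underline v\le\bar v$. It suffices to show $\underline v=\bar v=v$, which yields the locally uniform convergence. Since $c>0$, once one knows that $\underline v$ and $\bar v$ coincide with $v$ at infinity, a comparison argument for~\eqref{lc-erprob} at level $c$ (as in the proof of Theorem~\ref{lc-thm:ergodic}(iv)) gives $v\le\underline v\le\bar v\le v$ on $\R^N$; so everything reduces to the behavior of the relaxed limits at infinity.

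\emph{Behavior at infinity.} For the lower bound, $\min\{\psi(x)+ct,v(x)\}-C$ is a subsolution of $\bar u_t+H(x,D\bar u)=l+c$ lying below $u_0$ at $t=0$, so $\bar u(x,t)\ge\min\{\psi(x)+ct,v(x)\}-C$ and already $\underline v\ge v-C$; the constant is removed asymptotically because the running cost $l+c\ge c>0$ makes trajectories lingering near a compact set where $u_0$ is very negative non-optimal for $t$ large (control representation, or a comparison-and-restarting argument), so asymptotically $\bar u$ is governed by the values of $u_0$ at infinity, where~\eqref{u0-v-to0} gives $u_0\ge\min\{\psi,v\}-o(1)$; hence $\underline v\ge\min\{\psi,v\}$ on $\R^N$, in particular $\underline v\ge v$ on $\{v\le\psi\}$. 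The main obstacle is the upper bound $\bar v\le v$ at infinity and, on the set $\{v>\psi\}$, the matching lower bound $\underline v\ge v$: there~\eqref{u0-v-to0} controls $u_0$ neither from above nor from below by $v$, so one must show simultaneously that an arbitrarily large initial excess is dissipated and that the solution rises up to $v$ from below (the latter being consistent with $\bar u_t=l+c-H(x,D\bar u)\ge c>0$ where $\bar u\approx\psi$). I expect this to require a genuinely time-dependent barrier near infinity in $\{v>\psi\}$: a supersolution of $\bar u_t+H=l+c$ dominating $u_0$ and relaxing to $v+o(1)$ (using the coercivity~\eqref{Hcoerc}, so that large gradients force fast decay), together with a subsolution starting near $\min\{\psi,v\}$ and increasing to $v$. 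With $\underline v=\bar v=v$ at infinity, the comparison step above completes the proof.
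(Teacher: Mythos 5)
Your existence step is sound and essentially the paper's: $\psi$ and $v$ are locally Lipschitz subsolutions at level $c$ (Lemma~\ref{sous-sol-lip} plus $c\geq 0$), their minimum is again a subsolution by the Barron--Jensen property (Lemma~\ref{barron-jensen}), and~\eqref{u0-v-to0} lets you push $\min\{\psi,v\}$ below $u_0$ by a constant, so Proposition~\ref{lc-exis-uniq-cauchy} applies. The convergence part, however, has a genuine gap, and you acknowledge it yourself: the whole argument hinges on showing that the relaxed limits $\underline v,\bar v$ of $u+ct$ coincide with $v$ near infinity (so that a comparison in the spirit of Theorem~\ref{lc-thm:ergodic}(iv) can be invoked), and precisely this step is only described as ``I expect this to require a genuinely time-dependent barrier,'' with no construction. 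Worse, the reduction itself is doubtful: hypothesis~\eqref{u0-v-to0} constrains only the \emph{initial} data, and on the region $\{v>\psi\}$ (which can contain a full neighbourhood of infinity, e.g.\ $v=\lambda x+S(x)$ with $\psi$ constant in the example of Section~\ref{lc-sec:exple}) it gives no upper bound on $u_0-v$ and no lower bound beyond $u_0\gtrsim\psi$; nothing in the assumptions provides uniform-in-time control of the solution near infinity, so proving $\bar v-v\to 0$ and $\underline v-v\to 0$ at infinity is a stronger statement than the theorem's local convergence and is not obviously reachable. Your lower-bound heuristic (``lingering trajectories are non-optimal'') only yields $\underline v\geq\min\{\psi,v\}$, which on $\{v>\psi\}$ is strictly weaker than what you need.

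The missing idea, which is how the paper proceeds, is to never analyse the solution at infinity at all but to transfer~\eqref{u0-v-to0} through the \emph{evolution} via Barron--Jensen and then untruncate locally in space for large times. Concretely: truncate $\psi$ into $\psi_\e=\min\{\psi,-M_\e\}$ so that~\eqref{u0-v-to0} becomes a global inequality $\min\{\psi_\e,v\}-\e\leq\min\{\psi_\e,u_0\}\leq\min\{\psi_\e,v\}+\e$; by Lemma~\ref{barron-jensen} the solutions of~\eqref{lc-cauchy} issued from these three initial data are exactly $\min\{w_\e,v-ct\}\mp\e$ and $\min\{w_\e,u\}$, where $w_\e$ solves~\eqref{lc-cauchy} with data $\psi_\e$; comparison (Theorem~\ref{lc-compathm}) then gives a sandwich valid for all times. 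Since $w_\e$ is trapped between the time-independent functions $\psi_\e$ and a supersolution at level $0$ while $v-ct\to-\infty$ and $u+ct\leq v^+$, on any compact set and for $t$ large both minima select $v-ct$ and $u$ respectively, yielding $|u(x,t)+ct-v(x)|\leq\e$ there. This bypasses entirely the identification of relaxed limits and any barrier construction near infinity; without this (or an actual construction of your barrier), your proof is incomplete.
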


\noindent
Let us comment these results. The first convergence result means that,
starting from any bounded from below initial condition (with arbitrary
growth from above), the unique viscosity solution of~\eqref{lc-cauchy}  converges to a
solution $(c,v)$ of the ergodic problem~\eqref{lc-erprob}, which is
given by Theorem~\ref{lc-thm:ergodic}(iii), i.e., 
with
$c=0$ and $v\geq 0,$ $v\to +\infty$ at infinity.
When $u_0$ is not bounded from below, even if it is close to a solution
of the ergodic problem, we give an example
showing that the convergence may fail, see Section~\ref{lc-sec:exple},
where several examples and
interpretations in terms of the
underlying optimal control problem are given.
\smallskip

To describe the second convergence result, suppose that~\eqref{u0-v-to0}
holds with the particular constant subsolution $(0,M)$ for some constant $M$.
In this case,~\eqref{u0-v-to0}
is equivalent to $(u_0-v)(x)\to 0$ when $v(x)\to -\infty.$   
Since, for $c>0$, any solution $(c,v)$ of the ergodic problem
is necessarily unbounded from below (by Theorem~\ref{lc-thm:ergodic}(iv)),
Condition~\eqref{u0-v-to0}
may only happen for unbounded from below initial condition $u_0$.
In this sense, Theorem~\ref{lc-tm:longtime-infty} sheds a new light on
the picture of the asymptotic behavior for~\eqref{lc-cauchy}, bringing a positive result
for some particular unbounded from below initial data.
\smallskip

Theorem~\ref{lc-tm:longtime} and Theorem~\ref{lc-tm:longtime-infty}
generalize and make more precise~\cite[Theorem 4.1]{br06} and~\cite[Theorem 4.2]{br06}
respectively.
In~\cite{br06}, $H$ is bounded uniformly continuous in $\R^N\times B(0,R)$ for any $R>0$ and
$u_0$ is bounded from below and Lipschitz continuous.
Our results are also close to~\cite[Theorem 6.2]{ii09}
as far as Theorem~\ref{lc-tm:longtime} is concerned
and~\cite[Theorem 5.3]{ii09} is very close to Theorem~\ref{lc-tm:longtime-infty}, see Remark~\ref{rmqII}.
In~\cite{ii09}, $H$ may have arbitrary growth with respect to $p$
(\eqref{Hlip} is not required) and the initial condition is bounded from below
with possible arbitrary growth from above. The results apply to more general
equations than ours. The counterpart is that
the unique solvability of~\eqref{lc-cauchy} is not ensured by the assumptions
so {\em the} solution of~\eqref{lc-cauchy} is the one given by the
representation formula in the optimal control framework. The assumptions
are given in terms of existence of particular
sub or supersolutions of~\eqref{lc-erprob}, which may be difficult to check
in some cases.
Finally, let us point out that the proofs of~\cite{br06, ii09} use 
in a crucial way the interpretation of~\eqref{lc-cauchy}-\eqref{lc-erprob}
in terms of control problems and need some arguments of Weak KAM theory.
In this work, we give pure PDE proofs, which are interesting by themselves.
Finally, let us underline that in the arbitrary unbounded setting, we do
not have in hands local Lipschitz bounds, i.e. bounds on $|u_t|, |Du|\leq C,$
with $C$ independent of $t.$ These bounds are easy consequences of
the coercivity of $H$ in the periodic setting and in the Lipschitz setting
of~\cite{br06}. In the general unbounded case, such bounds require
additional restrictive assumptions.
Instead, we provide a more involved proof without further assumptions,
see the proof of Theorem~\ref{lc-tm:longtime}. 
\smallskip

Let us also mention that several other convergence results
are established in~\cite{ishii08} and~\cite{ii09} in the case
of {\em strictly} convex Hamiltonian $H$ and~\cite{ii08}
is devoted to a precise study in the one dimensional case.
We refer again the reader to the review~\cite{ishii09}
for details and many examples.
\smallskip

The paper is organized as follows.
We start by solving the ergodic problem~\eqref{lc-erprob}, see Section~\ref{lc-sec:erg}.
Then, we consider the evolution problem~\eqref{lc-cauchy} in Section~\ref{lc-sec:cauchy}.
Section~\ref{lc-sec:cv} is devoted to the proofs of the theorems
of convergence. Finally, Section~\ref{lc-sec:exple} provides several
examples based both on the Hamilton-Jacobi equations~\eqref{lc-cauchy}-\eqref{lc-erprob}
and on the associated optimal control problem.


\smallskip

\noindent{\bf Acknowledgements:}
Part of this work was made during the stay of T.-T. Nguyen as a Ph.D. student at IRMAR
and she would like to thank  University of Rennes~1 \& INSA for the hospitality.
The work of O. Ley and T.-T. Nguyen was partially supported by the Centre Henri
Lebesgue ANR-11-LABX-0020-01. The authors would like to thank the referees
for the careful reading of the manuscript and their useful comments.

\section{The Ergodic problem}
\label{lc-sec:erg}

Before giving the proof of Theorem~\ref{lc-thm:ergodic}, we start with a lemma
based on the coercivity of $H.$

\begin{lemma}\label{sous-sol-lip}
Let $\Omega\subset\R^N$ be an open bounded subset and $H$ satisfies~\eqref{Hcoerc}.
For every subsolution $(c,v)\in \R_+ \times USC(\overline{\Omega})$
of~\eqref{lc-erprob}, we have $v\in W^{1,\infty}(\Omega)$ and
\begin{eqnarray}\label{apriori-v}
  && |Dv(x)|\leq \max_{y\in\overline{\Omega}}\left\{\frac{l(y)+c}{\nu(y)}\right\}
  \quad \text{for a.e. $x\in \Omega.$}
\end{eqnarray}  
\end{lemma}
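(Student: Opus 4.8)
The plan is to derive a pointwise gradient bound from the coercivity assumption \eqref{Hcoerc} by a standard viscosity-solution localization argument. First I would recall that a subsolution $(c,v)$ of \eqref{lc-erprob} on $\overline{\Omega}$ satisfies, in the viscosity sense, $H(x,Dv(x)) \le l(x) + c$ in $\Omega$. Combining this with \eqref{Hcoerc}, namely $H(x,p) \ge \nu(x)|p|$, one gets formally $\nu(x)|Dv(x)| \le l(x) + c$, hence $|Dv(x)| \le (l(x)+c)/\nu(x) \le K$ where $K := \max_{\overline{\Omega}}\{(l(y)+c)/\nu(y)\}$ is finite since $\nu$ is continuous and strictly positive on the compact set $\overline{\Omega}$ and $l$ is continuous.

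To make this rigorous I would argue that $v$ is Lipschitz with constant $K$ on $\Omega$ (or on every convex, or every compactly contained, subset of $\Omega$, then pass to the general statement) and that such a Lipschitz bound is equivalent to the a.e.\ gradient bound via Rademacher's theorem. The classical way to obtain the Lipschitz estimate: fix $x_0 \in \Omega$ and a small ball $B(x_0,r) \subset \Omega$, and test the subsolution inequality against the smooth function $\phi(x) = (K+\delta)|x - x_0| + \text{const}$ (smoothed near $x_0$, or one works at points $x \neq x_0$ where this is already smooth) to show $v(x) - v(x_0) \le (K+\delta)|x-x_0|$ on the ball; by symmetry and letting $\delta \downarrow 0$ one gets the local Lipschitz bound, and since the bound $K$ is uniform it globalizes over $\Omega$. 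Alternatively, and perhaps more cleanly, I would invoke the well-known fact (see e.g. the standard theory of coercive Hamilton–Jacobi equations) that a viscosity subsolution of $H(x,Dv) \le g(x)$ with $H$ coercive as in \eqref{Hcoerc} is locally Lipschitz with the stated gradient bound; the proof is exactly the comparison-with-cones argument just sketched.

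The main technical point — not really an obstacle, but the step requiring care — is handling the non-smoothness of $v$ and of the cone test function at the vertex: one should either work with the upper semicontinuous envelope and test at maximum points of $v - \phi$ away from the vertex, or mollify. Everything else (finiteness of $K$, the passage from a uniform local Lipschitz constant to membership in $W^{1,\infty}(\Omega)$ with the a.e.\ bound \eqref{apriori-v}) is routine. I would therefore structure the proof as: (1) define $K$ and note $K < \infty$; (2) prove the local cone estimate $v(x) \le v(x_0) + K|x-x_0|$ via the viscosity subsolution inequality and \eqref{Hcoerc}; (3) conclude $v \in W^{1,\infty}_{\mathrm{loc}}(\Omega)$, hence $v \in W^{1,\infty}(\Omega)$ since the Lipschitz constant $K$ is uniform, and read off \eqref{apriori-v} from Rademacher's theorem.
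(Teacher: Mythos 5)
Your proposal is correct and follows essentially the same route as the paper: observe that by \eqref{Hcoerc} the subsolution $v$ is a viscosity subsolution of $|Dv|\leq \max_{\overline{\Omega}}\{(l+c)/\nu\}$, deduce the Lipschitz bound with that constant (the paper simply cites \cite[Proposition 1.14, p.140]{abil13} for this step, which is the comparison-with-cones fact you sketch), and conclude \eqref{apriori-v} by Rademacher's theorem. No substantive difference.
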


\begin{remark}
  Assumption~\eqref{Hcoerc} was stated in that way having in mind the Eikonal Equation~\eqref{eik}
  but it can be replaced by the classical assumption of coercivity
\begin{eqnarray}\label{Hcoerc-gene}
&& \mathop{\rm lim}_{|p|\to +\infty}  \mathop{\rm inf}_{x\in B(0,R)} H(x,p) = +\infty \quad \text{for all $R>0.$}
\end{eqnarray}
\end{remark}

\begin{proof}[Proof of Lemma~\ref{sous-sol-lip}]
Let $B(x_0,R)$ be any ball contained in $\Omega$.
Since $v$ is a viscosity subsolution of
\begin{eqnarray*}
  && |Dv(x)|\leq \max_{y\in\overline{\Omega}}\left\{\frac{l(y)+c}{\nu(y)}\right\}
  \quad \text{in $\Omega$},
\end{eqnarray*}  
we see from \cite[Proposition 1.14, p.140]{abil13}
that $v$ is Lipschitz continuous in $B(x_0,R)$ with the Lipschitz constant
$\max_{\overline{\Omega}}\left\{\frac{l+c}{\nu}\right\}$,
which implies together with Rademacher theorem
\begin{eqnarray*}
  && |Dv(x)|\leq \max_{y\in\overline{\Omega}}\left\{\frac{l(y)+c}{\nu(y)}\right\}
  \quad \text{for a.e. $x\in B(x_0,R)$.}
\end{eqnarray*}  
\end{proof}

We are now able to give the proof of Theorem~\ref{lc-thm:ergodic}.

\begin{proof}[Proof of Theorem~\ref{lc-thm:ergodic}] \
  \smallskip
  
\noindent (i) We follow some arguments of the proof of~\cite[Theorem 2.1]{br06}.
Fix $c\geq 0,$ noticing that $l(x)+c\geq 0$
for every $x\in\R^N$ and recalling that $H(x,0)=0,$ we infer that $0$ is a subsolution of~\eqref{lc-erprob}. 
For $R>0,$ we consider the Dirichlet problem
\begin{eqnarray}\label{pb-dirichl}
&& H(x,Dv)=l(x)+c \ \ \text{in $B(0,R)$,}
\quad v=0 \ \   \text{on $\partial B(0,R).$}
\end{eqnarray}
If $p_R\in \R^N$ and $C_R>0,$ $|p_R|$ are big enough, then, using~\eqref{Hcoerc},
$C_R+\langle p_R,x\rangle$ is a supersolution of~\eqref{pb-dirichl}.
By Perron's method up to the boundary (\cite[Theorem 6.1]{dalio02}),
the function
\begin{eqnarray*}
&&  \hspace*{-2cm}V_R(x):=\mathop{\rm sup}\{v\in USC(\overline{B}(0,R)) \text{ subsolution of~\eqref{pb-dirichl}}:\nonumber\\
&&  \hspace*{2cm}
  0\leq v(x)\leq C_R+\langle p_R,x\rangle \text{ for } x\in \overline{B}(0,R) \},
\end{eqnarray*}
is a discontinuous viscosity solution of~\eqref{pb-dirichl}.
Recall that the boundary conditions are satisfied in the viscosity sense meaning
that either the viscosity inequality or the boundary condition for the
semicontinuous envelopes holds at the boundary. We claim that $V_R\in W^{1,\infty}(\overline{B}(0,R))$
and $V_R(x)=0$ for every $x\in \partial B(0,R),$ i.e., the boundary conditions
are satisfied in the classical sense. At first, from Lemma~\ref{sous-sol-lip},
$V_R\in W^{1,\infty}(B(0,R)).$ By definition, $V_R\geq 0$ in $\overline{B}(0,R),$
so $(V_R)_*\geq 0$ on $\partial B(0,R)$ and the boundary condition holds
in the classical sense for the supersolution. It remains to check that $(V_R)^*\leq 0$ on $\partial B(0,R).$
We argue by contradiction assuming there exists $\hat{x}\in \partial B(0,R)$
such that $(V_R)^*(\hat{x})> 0.$ It follows that the viscosity inequality for subsolutions
holds at $\hat{x},$ i.e., for every $\varphi\in C^1(\overline{B}(0,R))$ such that
$\varphi\geq (V_R)^*$ over $\overline{B}(0,R)$ with $(V_R)^*(\hat{x})=\varphi(\hat{x}),$
we have $H(\hat{x}, D\varphi(\hat{x}))\leq l(\hat{x})+c$ and there exists at least
one such $\varphi.$ Consider, for $K>0,$ $\tilde{\varphi}(x):=\varphi(x)-K\langle\frac{\hat{x}}{|\hat{x}|},x-\hat{x}\rangle.$
We still have $\tilde{\varphi}\geq (V_R)^*$ over $\overline{B}(0,R)$ and $(V_R)^*(\hat{x})=\tilde{\varphi}(\hat{x}).$
Therefore  $H(\hat{x}, D\varphi(\hat{x})-K\frac{\hat{x}}{|\hat{x}|})\leq l(\hat{x})+c$
for every $K>0,$ which is absurd for large $K$ by~\eqref{Hcoerc}.
It ends the  proof of the claim.

We set $v_R(x)=V_R(x)-V_R(0).$ By Lemma~\ref{sous-sol-lip}, for every $R>R',$
we have
\begin{eqnarray}
  && |Dv_R(x)|=|DV_R(x)|\leq C_{R'}:= \max_{\overline{B}(0,R')}\left\{\frac{l+c}{\nu}\right\}
  \quad \text{a.e. $x\in B(0,R'),$}\label{estim-Dv-123}\\
  && |v_R(x)|=|V_R(x)-V_R(0)|\leq C_{R'}R'\quad \text{for $x\in B(0,R').$}\label{estim-v-123}
\end{eqnarray}
Up to an extraction, by Ascoli's Theorem and a diagonal process, $v_R$ converges
in $C(\R^N)$ to a function $v$ as $R\to +\infty,$ which still
satisfies~\eqref{estim-Dv-123}-\eqref{estim-v-123}.
By stability of viscosity solutions, $(c,v)$ is a solution of~\eqref{lc-erprob}. 
\smallskip
  
\noindent (ii)
Let $(c,v)$ and $(d,w)$ be two solutions of~\eqref{lc-erprob} and set
\begin{eqnarray*}
&& V(x,t) = v(x) - ct\\
&& W(x,t) = w(x) -dt. 
\end{eqnarray*}
To show that $c=d$, we argue by contradiction, assuming that $c < d$.
Obviously, $V$ is a viscosity solution of~\eqref{lc-cauchy} with $u_0 = v$ 
and $W$  is  a viscosity solution of~\eqref{lc-cauchy} with $u_0 = w$.
Using the comparison principle for~\eqref{lc-cauchy}, we get that
$$
V(x,t) - W(x,t) \leq \sup_{\R^N}\{v - w\} \qquad \text{ for all } (x,t) \in \R^N \times [0,\infty).
$$
This means that
$$
(d-c)t + v(x) - w(x) \leq \sup_{\R^N}\{v - w\} \qquad \text{ for all } (x,t) \in \R^N \times [0,\infty).
$$
Recalling that $\sup_{\R^N}|v-w| < \infty$,
we get a contradiction for $t$ large enough.
By exchanging the roles of $v,w,$ we conclude that $c=d.$
\smallskip
  
\noindent(iii) Let $c=0.$ We apply the Perron's method using in a crucial way $\mathcal{A}\not=\emptyset.$
Let $\mathcal{S}=\{ w\in USC(\R^N) \text{ subsolution of~\eqref{lc-erprob}}:
0\leq w \text{ and } w=0 \text{ on } \mathcal{A}\}$ and set
$$
v(x):=\mathop{\rm sup}_{w\in \mathcal{S}} w(x).
$$
Noticing that $l+c\geq 0$ and since $H(x,0)=0,$ we have $0\in \mathcal{S}.$ 
Let $x\in\R^N$ and $R>0$ large enough such that $x\in B(0,R)$ and
there exists $x_\mathcal{A}\in B(0,R)\cap \mathcal{A}.$
For all $w\in \mathcal{S},$ by Lemma~\ref{sous-sol-lip}, we have
\begin{eqnarray*}
&& 0\leq w(x)\leq w(x_\mathcal{A})+\max_{\overline{B}(0,R)}\left\{\frac{l+c}{\nu}\right\}\ |x-x_\mathcal{A}|\leq 2R \max_{\overline{B}(0,R)}\left\{\frac{l+c}{\nu}\right\},
\end{eqnarray*}  
since $w(x_\mathcal{A})=0.$ The above upper-bound does not depend on $w\in \mathcal{S},$ so we deduce that
$0\leq v(x)<+\infty$ for every $x\in\R^N.$

We claim that $v$ is a solution of~\eqref{lc-erprob}. At first, by classical arguments (\cite{barles94}), $v$ is still a subsolution
of~\eqref{lc-erprob} satisfying $v\geq 0$ in $\R^N$ and $v=0$ on $\mathcal{A}.$ By Lemma~\ref{sous-sol-lip},
$v\in W_{\rm loc}^{1,\infty}(\R^N).$ To prove that $v$ is a supersolution, we argue as usual by contradiction assuming that
there exists $\hat{x}$ and $\varphi\in C^1(\R^N)$ such that $\varphi\leq v,$ $v(\hat{x})=\varphi(\hat{x})$ and the
viscosity supersolution inequality does not hold, i.e., $H(\hat{x}, D\varphi(\hat{x}))< l(\hat{x})+c.$
To reach a contradiction, one slightly modify $v$ near $\hat{x}$ in order to build a new subsolution $\hat{v}$ in $\mathcal{S},$
which is strictly bigger than $v$ near $\hat{x}.$ To be able to proceed as in the classical proof, it is enough
to check that $\hat{x}\not\in \mathcal{A}$; otherwise $\hat{v}$ will not be 0 on $\mathcal{A}$ leading to $\hat{v} \not\in \mathcal{S}.$
If $\hat{x}\in \mathcal{A},$ then $l(\hat{x})+c=0.$ By~\eqref{Hnulle}, we obtain
$0\leq H(\hat{x}, D\varphi(\hat{x}))< l(\hat{x})+c=0,$ which is not possible.
It ends the proof of the claim.

From~\eqref{lc-argmin-l}, there exists $\epsilon_\mathcal{A}, R_\mathcal{A} >0$
such that $l(x) > \min _{\R^N}l+\epsilon_\mathcal{A}$ for all $x\in \R^N\setminus B(0,R_\mathcal{A}).$
By~\eqref{Hbornesup},
$v$ satisfies, in the viscosity sense
\begin{eqnarray*}
m(|Dv|) \geq H(x,Dv)\geq l(x)+c \geq \epsilon_\mathcal{A} \quad \text{in $\R^N\setminus B(0,R_\mathcal{A}).$}
\end{eqnarray*}
Therefore, for all $x\in\R^N$ and every $p$ in the viscosity subdifferential
$D^- v(x)$ of $v$ at $x,$ we have $|p|\geq m^{-1}(\epsilon_\mathcal{A}) >0.$
By the viscosity decrease principle~\cite[Lemma 4.1]{ley01},
for all $B(x,R)\subset \R^N\setminus B(0,R_\mathcal{A}),$ we obtain
\begin{eqnarray*}
&& \mathop{\rm inf}_{B(x,R)} v \leq v(x)- m^{-1}(\epsilon_\mathcal{A}) R.
\end{eqnarray*}
Since $v\geq 0,$ for any $R>0$ and $x$ such that $|x|> R_\mathcal{A}+R,$
we conclude $v(x)\geq m^{-1}(\epsilon_\mathcal{A}) R,$ which proves that
$v(x)\to +\infty$ as $|x|\to +\infty.$
\smallskip

\noindent(iv) Since $c> 0,$ there exists $\alpha >0$
such that $l(x)+c\geq \alpha$ for all $x\in\R^N.$

To prove that $v$ is unbounded from below, we use again the
viscosity decrease principle~\cite[Lemma 4.1]{ley01}.
By~\eqref{Hbornesup},
$v$ satisfies, in the viscosity sense
\begin{eqnarray*}
m(|Dv|) \geq H(x,Dv)\geq \alpha \quad \text{in $\R^N,$}
\end{eqnarray*}
which implies, for all $R>0,$
\begin{eqnarray*}
  \mathop{\rm inf}_{B(0,R)} v \leq v(0)-  m^{-1}(\alpha) R
\end{eqnarray*}
and so $v$ cannot be bounded from below.

For the second part of the result, we argue by contradiction assuming
that $v\not= w.$ Without loss of generality, there exists $\eta >0$ and
$\hat{x}\in\R^N$ such that $(v-w)(\hat{x})>3\eta.$
Since $(v-w)(x)\to 0$ as $|x|\to +\infty,$ there exists $R>0$
such that $|(v-w)(x)|<\eta$ when $|x|\geq R.$  Up to choose $0<\mu <1$ sufficiently close
to 1, we have $|(\mu v-w)(\hat{x})|> 2\eta$ and,
by compactness of $\partial B(0,R),$
$|(\mu v-w)(x)|< 2\eta$ for all $x\in \partial B(0,R).$
It follows that $M:=\max_{\overline{B}(0,R)}\mu v-w$
cannot be achieved at the boundary of $\overline{B}(0,R).$
Consider
\begin{eqnarray*}
M_\e :=  \max_{x,y\in \overline{B}(0,R)} \left\{\mu v(x)-w(y)-\frac{|x-y|^2}{\e^2}\right\},
\end{eqnarray*}
which is achieved at some $(\bar{x},\bar{y}).$
By classical properties (\cite{bcd97, barles94}), up to extract some subsequences $\epsilon\to 0,$
\begin{eqnarray*}
&& \frac{|\bar{x}-\bar{y}|^2}{\epsilon^2}\to 0, \\
&& \bar{x}, \bar{y} \to x_0 \quad\text{for some $x_0\in \overline{B}(0,R),$}\\
&& M_\e \to M.
\end{eqnarray*}
It follows that $M= (\mu v-w )(x_0)$ and therefore, for $\e$ small enough,
neither $\bar{x}$ nor $\bar{y}$ is on the boundary of  $\overline{B}(0,R).$
We can write the viscosity inequalities for $v$ subsolution at $\bar{x}$
and $w$ supersolution at $\bar{y}$ for small $\e$ leading to
\begin{eqnarray*}
  && H(\bar{x},\frac{\bar{p}}{\mu})\leq l(\bar{x})+c,\\
  && H(\bar{y},\bar{p})\geq l(\bar{y})+c,
\end{eqnarray*}
where we set $\displaystyle \bar{p}=2\frac{(\bar{x}-\bar{y})}{\e^2}.$
Noticing that
\begin{eqnarray*}
 && \mu v(\bar{x})- w(\bar{x})\leq \mu v(\bar{x})- w(\bar{y})-\frac{|\bar{x}-\bar{y}|^2}{\e^2}
\end{eqnarray*}
and using that $w$ is Lipschitz continuous with some constant $C_R$ in $\overline{B}(0,R)$
by Lemma~\ref{sous-sol-lip}, we obtain $|\bar{p}|\leq C_R.$ Therefore, up to extract
a subsequence $\e\to 0,$ we have $\bar{p}\to p_0.$
By the convexity of $H$,
\begin{eqnarray*}
&& H(\bar{x}, p)=H(\bar{x}, \mu \frac{\bar{p}}{\mu}+(1-\mu)0)
  \leq \mu H(\bar{x}, \frac{\bar{p}}{\mu})+(1-\mu) H(\bar{x}, 0).
\end{eqnarray*}
Using $H(\bar{x},0)=0,$ we get
\begin{eqnarray*}\label{convexityH}
&& 0\leq \mu H(\bar{x}, \frac{\bar{p}}{\mu})- H(\bar{x}, \bar{p}).
\end{eqnarray*}
Subtracting the viscosity inequalities and using
the above estimates, we obtain
\begin{eqnarray*}
&& 0\leq  \mu H(\bar{x},\frac{\bar{p}}{\mu})-H(\bar{x},\bar{p})
\leq  H(\bar{y},\bar{p})-H(\bar{x},\bar{p})+\mu (l(\bar{x})+c)-(l(\bar{y})+c).
\end{eqnarray*}
Sending $\e\to 0,$ we reach $0\leq (\mu -1)(l(x_0)+c)\leq (\mu -1) \alpha <0,$
which is a contradiction. It ends the proof of the theorem.
\end{proof}

\begin{remark}\label{disc-ergo} \ \\
(i) In the periodic setting, there is a unique $c= 0$ such that~\eqref{lc-erprob}
has a solution. It is not anymore the case in the unbounded setting where
there exist solutions for all $c\geq 0.$
The proof 
is adapted from~\cite[Theorem 2.1]{br06}. Similar issues are
studied in~\cite{fm07}.
Notice that, when $c< 0,$ there is no
subsolution (thus no solution)
because of~\eqref{Hnulle}.\\
(ii) In the periodic setting, the classical proof of existence of a solution
to~\eqref{lc-erprob} (\cite{lpv86}) uses the auxiliary approximate equation
\begin{eqnarray}\label{eq-lambda}
&& \lambda v^\lambda + H(x, Dv^\lambda )=l(x) \quad\text{in $\R^N.$}   
\end{eqnarray}
In our case, it gives only the existence of a solution $(c,v)$ with
$c=  0$ but not for all $c\geq 0.$
\\
(iii) Neither the proof using~\eqref{eq-lambda}, nor the proof of Theorem~\ref{lc-thm:ergodic}(i)
using the Dirichlet problem~\eqref{pb-dirichl} yields a nonnegative (or bounded
from below) solution $v$ of~\eqref{lc-erprob} for  $c= 0$.
See Section~\ref{sec:exple-sol-ergo}
for an explicit computation of the solution of~\eqref{pb-dirichl}. It is why
we need another proof to construct such a solution. See~\cite{bm16} for the
same result in the viscous case.
\\
(iv) For $c= 0,$ bounded solutions to the ergodic problem
may exist, e.g., when $l$ is periodic (\cite{lpv86} and the example in
Remark~\ref{rmk-perio}). If $\mathcal{A}$
is bounded, we can prove with similar arguments as in the proof of the theorem
that all solutions of the ergodic problem are unbounded.
\\
(v) When $c> 0,$ there is no bounded solution to~\eqref{lc-erprob}
even if $l$ is periodic or bounded.
\\
(vi) Theorem~\ref{lc-thm:ergodic} does not require $H$ to satisfy~\eqref{Hlip}
so it applies to more general equations than~\eqref{eik}, for instance with quadratic
Hamiltonians.
\\
(vii) The assumption that a comparison principle in $C(\R^N\times [0,+\infty))$ holds for~\eqref{lc-cauchy}
in Theorem~\ref{lc-thm:ergodic}(ii)
may seem to be a strong assumption but it is true for the Eikonal equation,
i.e., when $H$ satisfies~\eqref{Hlip}, see Theorem~\ref{lc-compathm}.
In this case, $H$ automatically satisfies~\eqref{Hbornesup} with
$m(r)=C_H r.$
\end{remark}

\section{The Cauchy problem}
\label{lc-sec:cauchy}

In this section we study the Cauchy problem~\eqref{lc-cauchy}.
We start with some comments about Proposition~\ref{lc-exis-uniq-cauchy}
and then we prove it.

Existence and uniqueness are
  based on the comparison Theorem~\ref{lc-compathm} without growth condition,
  which holds when~\eqref{Hlip} is satisfied thanks to the finite speed of propagation.
  When $u_0$ is bounded from below and~\eqref{Hnulle} holds, $\mathop{\rm inf}_{\R^N}u_0$ is a
  subsolution of~\eqref{lc-cauchy} and~\eqref{encadrement-u} takes the simpler form
\begin{eqnarray*}
&& \mathop{\rm inf}_{\R^N}u_0 \leq u(x,t)+ct\leq v^+(x).   
\end{eqnarray*}


\begin{proof}[Proof of Proposition~\ref{lc-exis-uniq-cauchy}] \ \\
(i) Let
\begin{eqnarray*}
&& {v^+(x):= f_0(|x|)+\int_0^{|x|} f_1(s)ds, }
\end{eqnarray*}
where
\begin{eqnarray*}
  &&
\left\{
\begin{array}{l}
  \text{$f_0:\R_+\to\R_+$  $C^1$ nondecreasing, $f_0'(0)=0$ and $f_0(|x|)\geq u_0(x)$}\\
  \text{$f_1:\R_+\to\R_+$ continuous, $f_1(0)=0$ and
    $\displaystyle f_1(|x|)\geq \frac{l(x)+c}{\nu(x)},$}\\
\end{array}
\right.
\end{eqnarray*}
where $\nu$ appears in~\eqref{Hcoerc}.

The existence of such functions $f_0, f_1$ is classical (see~\cite[Proof of Theorem 2.2]{barles94} for instance).
It is straightforward to see that $v^+\in C^1(\R^N),$ $v^+\geq u_0$ and $(c,v^+)$ is a supersolution
of~\eqref{lc-erprob} thanks to~\eqref{Hcoerc}.

\noindent (ii)
It is obvious that $(c,v)$ is a solution (respectively a subsolution, supersolution) of~\eqref{lc-erprob}
if and only if $V(x,t)=v(x)-ct$ is a solution (respectively a subsolution, supersolution)
of~\eqref{lc-cauchy} with initial data $V(x,0)=v(x).$
{We have $v^-\leq u_0\leq v^+,$ where $v^-$ is the subsolution given by assumption
  and $v^+$ is the supersolution built in (i).}
Using Perron's method and Theorem~\ref{lc-compathm}, which holds thanks to~\eqref{Hlip},
we conclude that there exists a unique viscosity
solution $u\in C(\R^N\times [0,+\infty))$ of~\eqref{lc-cauchy} such that
\begin{eqnarray*}
&& v^-(x)-ct\leq u(x,t)\leq v^+(x)-ct.    
\end{eqnarray*}
\end{proof}

\section{Large time behavior of solutions}
\label{lc-sec:cv}

\subsection{Proof of Theorem~\ref{lc-tm:longtime}}

We first consider the case when $u_0$ is bounded. Recalling that $c=0$ and $u$ is solution
of~\eqref{lc-cauchy}, we see by Proposition~\ref{lc-exis-uniq-cauchy} that
\begin{eqnarray}\label{borneLinfini}
&& \mathop{\rm inf}_{\R^N}u_0 \leq u(x,t)\leq v^+(x),   
\end{eqnarray}
where $(0,v^+)$ is a supersolution of~\eqref{lc-erprob} satisfying $v^+\geq u_0.$

The first step is to obtain better estimates for the large time behavior of $u$. To do so, we consider $(0,v_1)$ and $(0,v_2)$ two solutions of~\eqref{lc-erprob}. Such solutions exist from Theorem~\ref{lc-thm:ergodic}(iii) with $c=0$ and $\mathcal{A}\not=\emptyset.$ Moreover, $v_1(x), v_2 (x)\to +\infty$ as $|x|\to +\infty$ since  $\mathcal{A}$ is supposed to be compact and~\eqref{Hbornesup} holds because
of Assumptions~\eqref{Hnulle} and~\eqref{Hlip}.

We have
\begin{lemma}\label{IneqLT} There exist two constants $k_1,k_2\geq 0$ such that
$$ v_1(x) -k_1 \leq \liminf_{t \to +\infty} u(x,t) \leq \limsup_{t \to +\infty} u(x,t)\leq v_2(x)+k_2\quad\hbox{in  }\R^N .$$
As a consequence, for any solutions $(0,v_1)$ and $(0,v_2)$ of~\eqref{lc-erprob}, $v_1-v_2$ is bounded.
\end{lemma}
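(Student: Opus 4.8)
The plan is to split the estimate into an upper and a lower bound, the upper one being an immediate consequence of comparison while the lower one carries the whole difficulty. Throughout $u_0$ is bounded, and I write $m_0:=\inf_{\R^N}u_0$, $M_0:=\sup_{\R^N}u_0$; recall from~\eqref{borneLinfini} that $m_0\le u(\cdot,s)\le v^+$ for all $s\ge0$. For the upper bound, since $v_2\ge0$ by Theorem~\ref{lc-thm:ergodic}(iii), the stationary function $(x,t)\mapsto v_2(x)+M_0$ is a solution (hence a supersolution) of~\eqref{lc-cauchy} (adding a constant preserves being a solution of~\eqref{lc-erprob}) lying above $u_0$ at $t=0$; by the comparison Theorem~\ref{lc-compathm}, available thanks to~\eqref{Hlip}, we get $u(x,t)\le v_2(x)+M_0$ for all $t\ge0$, hence $\limsup_{t\to+\infty}u(x,t)\le v_2(x)+k_2$ with $k_2:=M_0$.

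For the lower bound I would introduce the half-relaxed limit $\underline u(x):=\liminf_{(y,s)\to(x,+\infty)}u(y,s)$, which is finite by~\eqref{borneLinfini} and, by the usual stability of viscosity supersolutions under the half-relaxed limit as $t\to+\infty$, is a supersolution of the ergodic equation~\eqref{lc-erprob} with $c=0$, satisfying $\underline u\ge m_0$ (and, by the very decrease-principle argument of the proof of Theorem~\ref{lc-thm:ergodic}(iii), using~\eqref{Hbornesup} with $m(r)=C_Hr$, \eqref{lc-argmin-l} and~\cite[Lemma 4.1]{ley01}, one also gets $\underline u(x)\to+\infty$ as $|x|\to+\infty$). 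The crux is then the comparison
\[
v_1\ \le\ \underline u-\min_{\mathcal A}\underline u\qquad\text{in }\R^N,
\]
between the subsolution $v_1$, which vanishes on the compact Aubry set $\mathcal A$, and the supersolution $\underline u-\min_{\mathcal A}\underline u$, which is $\ge0$ on $\mathcal A$. This is a \emph{comparison up to the Aubry set}: on $\mathcal A$ one has $l\equiv0$, so by~\eqref{Hnulle} the subsolution inequality there is rigid (no relevant superdifferential), which is exactly what lets the inequality, true on $\mathcal A$, propagate to all of $\R^N$. Granting it, $\underline u\ge v_1+\min_{\mathcal A}\underline u\ge v_1+m_0$, hence $\liminf_{t\to+\infty}u(x,t)\ge\underline u(x)\ge v_1(x)-k_1$ with $k_1:=\max\{0,-m_0\}$.

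Combining the two bounds yields $v_1-v_2\le k_1+k_2$ on $\R^N$; the argument being symmetric in $(v_1,v_2)$ (the solution $u$ does not depend on this labelling), $v_2-v_1$ is bounded as well, so $v_1-v_2$ is bounded. Moreover, the same comparison up to the Aubry set, applied to an arbitrary solution $w$ of~\eqref{lc-erprob} with $c=0$, gives $w-\max_{\mathcal A}w\le v_1$ and $v_1\le w-\min_{\mathcal A}w$ (using $v_1=0$ on $\mathcal A$ and $\mathcal A$ compact), whence $|w-v_1|\le\max_{\mathcal A}w-\min_{\mathcal A}w<\infty$; so \emph{any} two solutions of the ergodic problem differ by a bounded amount.

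The only real difficulty is the comparison up to the Aubry set in the lower bound; I would prove it by a pure-PDE argument — doubling of variables localized to large balls $B(0,R)$, the boundary being harmless since $v_1,\underline u\to+\infty$, with the usual viscosity estimates replaced near the contact set by the rigidity of subsolutions on $\mathcal A$ (where $l\equiv0$, \eqref{Hnulle}) — this being the substitute, in our framework, for the weak KAM comparison of~\cite{br06,ii09}. Everything else is a direct application of Theorem~\ref{lc-compathm} and of tools already developed for Theorem~\ref{lc-thm:ergodic}. Note finally that, contrary to the periodic or Lipschitz settings, no uniform-in-$t$ bound on $|Du|$, $|u_t|$ is available, which is precisely why the half-relaxed-limit approach, rather than Arzelà–Ascoli in $(x,t)$, is the right tool here.
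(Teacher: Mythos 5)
Your $\limsup$ bound is fine and is essentially the paper's argument (compare $u$ with $v_2+k_2$ via Theorem~\ref{lc-compathm}). The $\liminf$ bound, however, contains a genuine gap: the ``comparison up to the Aubry set'' between the subsolution $v_1$ and the supersolution $\underline u-\min_{\mathcal A}\underline u$ is precisely the step you do not prove, and your justification for the localization (``the boundary is harmless since $v_1,\underline u\to+\infty$'') is not valid, because what must be controlled on $\partial B(0,R)$ in a doubling argument is the \emph{difference} $v_1-\underline u$, on which the divergence of each function separately gives no information. Such an Ishii-type comparison is in fact false without control of the difference at infinity: for $H(x,p)=|p|$, $l(x)=|x|$, both $(0,\tfrac12 x^2)$ and $(0,-\tfrac12 x^2)$ solve~\eqref{lc-erprob}, and $\sup_{\R}\{v_1-w\}=+\infty$ while $\max_{\mathcal A}\{v_1-w\}=0$ for $v_1=\tfrac12x^2$, $w=-\tfrac12x^2$. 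The same example shows that your claimed estimate $|w-v_1|\le \max_{\mathcal A}w-\min_{\mathcal A}w$ for an \emph{arbitrary} solution $w$ cannot hold; the ``consequence'' only concerns solutions that are bounded from below (cf.\ Corollary~\ref{meme-comport}). To close your argument you would need (a) a proof that $\underline u\to+\infty$ (you invoke the decrease principle, but $\underline u$ is only lower semicontinuous, whereas in Theorem~\ref{lc-thm:ergodic}(iii) it is applied to locally Lipschitz functions), and (b) a truncation of the subsolution, i.e.\ comparing $\min\{v_1,C\}$ with $\underline u$ via Lemma~\ref{barron-jensen} and letting $C\to+\infty$; this is exactly the argument the paper performs \emph{after} Lemma~\ref{IneqLT}, using the lemma to guarantee the behavior at infinity, so you must also make sure your route is not circular. (Also, $v_1=0$ on $\mathcal A$ is not part of the statement of Theorem~\ref{lc-thm:ergodic}(iii); this is harmless, since $\max_{\mathcal A}(v_1-\underline u)$ can replace $0$, but it should not be asserted.)

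For comparison, the paper's proof of the $\liminf$ inequality avoids stationary comparison altogether: with $C_1=\sup_{\overline B(0,R_{\mathcal A})}v_1+1$ and $\eta_{\mathcal A}>0$ such that $l\ge\eta_{\mathcal A}$ outside $B(0,R_{\mathcal A})$, the function $w^-(x,t)=C_1+\eta_{\mathcal A}t$ is a subsolution of~\eqref{lc-cauchy} outside $\overline B(0,R_{\mathcal A})$; by Lemma~\ref{barron-jensen}, $\min\{v_1(x),C_1+\eta_{\mathcal A}t\}-k_1$ is a global subsolution (near $\overline B(0,R_{\mathcal A})$ it coincides with $v_1-k_1$), it lies below $u_0$ for a suitable $k_1$, and the Cauchy comparison Theorem~\ref{lc-compathm} plus $C_1+\eta_{\mathcal A}t\to+\infty$ give $\liminf_{t\to+\infty}u\ge v_1-k_1$. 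If you wish to keep your half-relaxed-limit route, you must supply a complete proof of the truncated stationary comparison; as written, the central step is asserted rather than proved and its localization claim is incorrect.
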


\begin{proof}[Proof of Lemma~\ref{IneqLT}]  The proof of third inequality in Lemma~\ref{IneqLT} is obvious : since $u_0$ is bounded and $v_2 (x)\to +\infty$ as $|x|\to +\infty$, there exists $k_2$ such that $u_0\leq v_2+k_2 $ in $\R^N.$ Then, by comparison (Theorem~\ref{lc-compathm})
$$ u(x,t) \leq v_2(x)+k_2\quad\hbox{in  }\R^N\times [0,+\infty)\; ,$$
which implies the $\limsup$-inequality.

The $\liminf$-one is less standard. Let $R_\mathcal{A}>0$ be such that $\mathcal{A}\subset B(0,R_\mathcal{A}/2)$ and set
\begin{eqnarray*}
  && C_1=C_1(\mathcal{A},v_1):= \mathop{\rm sup}_{\overline{B}(0,R_\mathcal{A})} v_1 +1.
\end{eqnarray*}
Notice that, by definition of $\mathcal{A}$ and~\eqref{lc-argmin-l},
there exists $\eta_\mathcal{A} >0$ such that
\begin{eqnarray} \label{prop-hors-A}
  && l(x)\geq \eta_\mathcal{A} >0 \quad \text{for all $x\in \R^N\setminus B(0,R_\mathcal{A}).$}
\end{eqnarray}
Using that ${\rm min}\{v_1,C_1\}$ is bounded from above and $u_0$ is
bounded, there exists $k_1=k_1(\mathcal{A},v_1,u_0)$ such that
\begin{eqnarray}\label{ineg720}
  && {\rm min}\{v_1,C_1\}-k_1 \leq u_0 \quad \text{in $\R^N.$}
\end{eqnarray}

Next we have to examine the large time behavior of the solution associated to the initial condition ${\rm min}\{v_1,C_1\}-k_1$ and to do so, we use the following result of Barron and Jensen (see Appendix).
\begin{lemma}\label{barron-jensen} \cite{bj90}
  Assume~\eqref{Hconvex} and let $u,\tilde{u}$ be locally Lipschitz subsolutions (resp. solutions) 
  of~\eqref{lc-cauchy}.
  Then  ${\rm min} \{u,\tilde{u}\}$ is still a subsolution (resp. a solution) of~\eqref{lc-cauchy}.
\end{lemma}
To use it, we remark that the function $w^-(x,t):= C_1+\eta_\mathcal{A} t$ is a smooth subsolution of~\eqref{lc-cauchy}
in $(\R^N\setminus \overline{B}(0,R_\mathcal{A}))\times (0,+\infty).$ Indeed, for all $|x|>R_\mathcal{A},$ $t>0,$
\begin{eqnarray*}
&& w_t^- +H(x,Dw^-)= \eta_\mathcal{A} +H(x,0)=\eta_\mathcal{A} \leq l(x)
\end{eqnarray*}
using~\eqref{Hnulle} and~\eqref{prop-hors-A}. Since $v_1$ is a locally Lipschitz continuous subsolution of~\eqref{lc-cauchy}
in $\R^N\times (0,+\infty)$, we can use Lemma~\ref{barron-jensen} in $(\R^N\setminus \overline{B}(0,R_\mathcal{A}))\times (0,+\infty)$ to conclude that $\min\{v_1,w^-\}-k_1$ is a subsolution, while in a neighborhood of $\overline{B}(0,R_\mathcal{A})\times (0,+\infty)$, we have $\min\{v_1,w^-\}-k_1=v_1-k_1$ by definition of $C_1$.

Then, by comparison (Theorem~\ref{lc-compathm})
$$ \min\{v_1(x),C_1+\eta_\mathcal{A} t\}-k_1 \leq u(x,t) \quad\hbox{in  }\R^N\times [0,+\infty)\; ,$$
and one concludes easily.

The last assertion of Lemma~\ref{IneqLT}
is obvious since $v_1, v_2$ are arbitrary solutions of~\eqref{lc-erprob} and we can exchange their roles.
\end{proof}

The next step of the proof of Theorem~\ref{lc-tm:longtime}
consists in introducing the half-relaxed limits~\cite{cil92, barles94}
$$
\underline{u}(x)=\mathop{{\rm lim\,inf}_*}_{t\to +\infty} u(x,t), \qquad
\overline{u}(x)=\mathop{{\rm lim\,sup}^*}_{t\to +\infty} u(x,t).
$$
They are well-defined for all $x\in\R^N$ thanks to~\eqref{borneLinfini} or Lemma~\ref{IneqLT}.
We recall that $\underline{u}\leq \overline{u}$
by definition and $\underline{u} =\overline{u}$ if and only if $u(x,t)$ converges
locally uniformly in $\R^N$ as $t\to +\infty.$
Therefore, to prove~\eqref{cv-thm-main}, it is enough to prove
$\overline{u}\leq \underline{u}$ in $\R^N.$

A formal direct proof of this inequality is easy: $\overline{u}$ is a subsolution of~\eqref{lc-erprob}, while $\underline{u}$ is a supersolution of~\eqref{lc-erprob}; by Lemma~\ref{barron-jensen}, for any constant $C>0$, $\min\{\overline{u},C\}$ is still a subsolution of~\eqref{lc-erprob} and Lemma~\ref{IneqLT} shows that $\min\{\overline{u},C\}-\underline{u} \to -\infty$ as $|x| \to +\infty$. Moreover $0$ is a strict subsolution of~\eqref{lc-erprob} outside $\mathcal{A}$, therefore by comparison arguments of Ishii \cite{ishii87a} 
$$
\max_{\R^N} \{\min\{\overline{u},C\}-\underline{u}\} \leq \max_{\mathcal{A}} \{\min\{\overline{u},C\}-\underline{u}\}\; ,$$
and letting $C$ tend to $+\infty$ gives $\overline{u}-\underline{u} \leq \max_{\mathcal{A}} \{\overline{u}-\underline{u}\}.$
But the right-hand side is 0
since $u(x,t)$ is decreasing in $t$ on $\mathcal{A}$ using  $H(x,p) \geq 0$ and $l(x)=0$ if $x \in \mathcal{A}$.
This gives the result.

This formal proof, although almost correct, is not correct since we do not have a local uniform convergence of $u$ in a neighborhood of $\mathcal{A}$, in particular because we do not have equicontinuity of the family $\{u(\cdot,t), t\geq 0\}.$ To overcome this difficulty, we use some approximations by inf- and sup-convolutions.

For all $\e >0,$ we introduce
\begin{eqnarray*}
&& u_\e (x,t)=\mathop{\rm inf}_{s\in (0,+\infty)}\{u(x,s)+\frac{|t-s|^2}{\e^2}\},\\
&& u^\e (x,t)=\mathop{\rm sup}_{s\in (0,+\infty)}\{u(x,s)-\frac{|t-s|^2}{\e^2}\}.
\end{eqnarray*}
By~\eqref{borneLinfini}, they are well-defined for all $(x,t)\in\R^N\times [0,+\infty)$
and we have
\begin{eqnarray}\label{ineg-sup-inf241}
&& \mathop{\rm inf}_{\R^N}u_0  \leq u_\e (x,t)\leq u(x,t)\leq u^\e (x,t)\leq v^+(x).
\end{eqnarray}
Notice that the infimum and the supremum are achieved
in $u_\e(x,t)$ and $u^\e(x,t)$ respectively. Moreover Lemma~\ref{IneqLT} still holds for $u_\e$ and $u^\e$.
Taking in the same way the half-relaxed limits for $u_\e$ and $u^\e$,
we obtain (with obvious notations)
\begin{eqnarray*}
  && \mathop{\rm inf}_{\R^N}u_0  \leq \underline{u_\e}
  \leq \underline{u} \leq  \overline{u} \leq \overline{u^\e} \leq v^+
  \quad \text{in $\R^N.$}
\end{eqnarray*}
To prove the convergence result~\eqref{cv-thm-main}, it is therefore sufficient to establish
\begin{eqnarray}\label{but-goal}
\overline{u^\e}\leq \underline{u_\e} \quad \text{in $\R^N,$}
\end{eqnarray}
which is our purpose from now on.
\medskip

The following lemma, the proof of which is standard and left to the reader, collects some useful properties of
$u_\e$ and $u^\e.$
\begin{lemma}\label{prop-convol}\ \\
(i) The functions $u_\e$ and $u^\e$ converge locally
uniformly to $u$ in $\R^N\times [0,+\infty)$ as $\e\to 0.$
\smallskip

\noindent
(ii) The functions $u_\e$ and $u^\e$ are Lipschitz continuous
with respect to $t$ locally uniformly in space, i.e.,
for all $R>0,$ there
exists $C_{\e,R}>0$ such that, for all $x\in B(0,R),$ $t,t'\geq 0,$
\begin{eqnarray}\label{lipt-eps}
&& |u_\e (x,t)-u_\e (x,t')|, |u^\e (x,t)-u^\e (x,t')| \leq C_{\e,R}|t-t'|.
\end{eqnarray}
(iii) For all open bounded subset $\Omega\subset\R^N$,
here exists $t_{\e,\Omega} >0$ with $t_{\e,\Omega}\to 0$ as $\e\to 0,$ such that
  $u_\e$ is solution of~\eqref{lc-cauchy}
  and $u^\e$ is subsolution of~\eqref{lc-cauchy}
  in $\Omega\times (t_{\e,\Omega},+\infty).$
\smallskip

\noindent
(iv) For all $R>0$, there exists $C_{\e,R}, t_{\e, R}>0$ such that,
for all $t>t_{\e,R},$  $u_\e(\cdot,t)$ and $u^\e(\cdot,t)$
are subsolutions of
\begin{eqnarray} \label{sous-sol-enx}
 H(x,Dw(x)) \leq l(x)+c+2C_{\e,R},
 \qquad {\rm in } \ B(0,R).
\end{eqnarray}
Therefore, $u_\e(\cdot,t)$ and $u^\e(\cdot,t)$
are locally Lipschitz continuous in space with a Lipschitz constant independent of $t.$
\end{lemma}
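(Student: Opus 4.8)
The plan is to verify the four assertions in order, relying on the basic representation of $u_\e$ and $u^\e$ as inf- and sup-convolutions in the time variable together with the bounds \eqref{borneLinfini} and Lemma \ref{IneqLT}. For (i), I would argue as in the classical theory of inf-/sup-convolutions: since $u\in C(\R^N\times[0,+\infty))$ and is locally bounded by \eqref{borneLinfini}, for each compact $K\subset\R^N\times[0,+\infty)$ the infimum defining $u_\e(x,t)$ is achieved at some $s_\e=s_\e(x,t)$, and the boundedness of $u$ near $K$ forces $|t-s_\e|^2/\e^2$ to stay bounded, hence $|t-s_\e|=O(\e)$ uniformly on $K$; local uniform continuity of $u$ then gives $|u_\e(x,t)-u(x,t)|\to 0$ uniformly on $K$, and symmetrically for $u^\e$.

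For (ii), the key observation is the standard one for inf-convolutions in one variable: fixing $x$, for any $t,t'\ge 0$, if $s$ is optimal for $u_\e(x,t')$ then using the competitor $s$ in the infimum defining $u_\e(x,t)$ gives $u_\e(x,t)-u_\e(x,t')\le (|t-s|^2-|t'-s|^2)/\e^2$, and one bounds $|t-s|$ in terms of the local oscillation of $u$ (again via \eqref{borneLinfini} restricted to $x\in B(0,R)$), so that $|t-s|\le C_{\e,R}'$; this yields the Lipschitz bound \eqref{lipt-eps} with a constant depending only on $\e$ and $R$. For (iii), I would use the fact that $u$ solves \eqref{lc-cauchy} on $\R^N\times(0,+\infty)$ together with the classical result that the inf-convolution of a supersolution is a supersolution, and the sup-convolution of a subsolution is a subsolution, on the set of points whose optimal $s$ stays in the open interval $(0,+\infty)$. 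Since on $\Omega\times(0,+\infty)$ with $\Omega$ bounded the optimal $s$ satisfies $|t-s|=O(\e)$ locally uniformly, the optimal $s$ lies in $(0,+\infty)$ as soon as $t>t_{\e,\Omega}$ for a suitable $t_{\e,\Omega}\to 0$; combined with (ii)—which gives $u$ a solution/subsolution whose only failure is the boundary effect at $s=0$—this yields that $u_\e$ is a solution and $u^\e$ a subsolution of \eqref{lc-cauchy} on $\Omega\times(t_{\e,\Omega},+\infty)$. (One also uses here that $H$ does not depend on $t$, so the shifted minimizers do not affect the $x$-equation.)

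Finally, (iv) follows by combining (ii) and (iii): on $B(0,R)\times(t_{\e,R},+\infty)$ we have, in the viscosity sense, $\partial_t u_\e + H(x,Du_\e) = l(x)+c$ (and $\le$ for $u^\e$), and by (ii) $|\partial_t u_\e|\le C_{\e,R}$, hence $H(x,Du_\e(\cdot,t))\le l(x)+c+C_{\e,R}$ as a stationary inequality in $B(0,R)$ for each fixed $t>t_{\e,R}$ (absorbing the time-derivative term into the right-hand side; the factor $2$ in \eqref{sous-sol-enx} is a harmless margin). The coercivity assumption \eqref{Hcoerc} (via Lemma \ref{sous-sol-lip}) then gives a Lipschitz bound on $u_\e(\cdot,t)$ in $B(0,R)$, and this bound depends only on $\e$ and $R$, not on $t$; the same applies to $u^\e$. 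The main obstacle is making the bookkeeping in (iii) precise—namely, checking that the optimal $s$ genuinely stays away from the endpoint $0$ uniformly once $t>t_{\e,\Omega}$, so that the interior-equation property of $u$ can be transported to $u_\e$, $u^\e$; everything else is routine and, as stated in the lemma, left to the reader.
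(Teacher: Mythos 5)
Parts (i) and (ii) of your plan are the standard inf-/sup-convolution arguments and are correct: the bound $|t-s|\le\bigl(\sup_{B(0,R)}v^+-\inf_{\R^N}u_0\bigr)^{1/2}\e$ coming from \eqref{borneLinfini} is exactly what controls the optimal times, and the competitor argument in (ii) gives \eqref{lipt-eps} after the routine restriction to $|t-t'|\le\e$ and chaining. The genuine gap is in (iii). The classical facts you invoke (inf-convolution of a supersolution is a supersolution, sup-convolution of a subsolution is a subsolution, wherever the optimal $s$ stays interior) only yield that $u_\e$ is a \emph{supersolution} and $u^\e$ a subsolution; they do not give the subsolution property of the inf-convolution $u_\e$, which is exactly what the assertion ``$u_\e$ is a solution'' requires and which is false for general non-convex $H$. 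Your sentence ``combined with (ii) \dots this yields that $u_\e$ is a solution'' does not supply this step. This is the one non-classical point of the lemma, and it is where the convexity assumption \eqref{Hconvex} enters: as indicated in the appendix, one uses the Barron--Jensen characterization (Theorem \ref{thm-bj}, together with \cite[Lemma 3.2]{ley01} and the fact that $H,l$ do not depend on $t$). Since inf-convolution preserves test functions touching from below, the Barron--Jensen (minimum-point) solution property of $u$ transfers to $u_\e$ at points where the optimal $s$ is interior, and for locally Lipschitz functions this is equivalent to being a viscosity solution.

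The gap propagates to (iv): your derivation of \eqref{sous-sol-enx} for $u_\e$ uses the full viscosity identity $\partial_t u_\e+H(x,Du_\e)=l+c$, whose subsolution half is precisely what is missing; and this subsolution property of $u_\e$ is also what the proof of Theorem \ref{lc-tm:longtime} later exploits (the a.e. inequality \eqref{lc-eq-lip123} with $w=u_\e$, giving the monotonicity on $\mathcal{A}$), so it cannot be dispensed with. Once the Barron--Jensen step is inserted, the rest of your (iv) is fine: the time-Lipschitz bound of (ii) lets you absorb $\partial_t$ into the right-hand side (the factor $2$ covering the doubling-in-time error in the viscosity argument), and Lemma \ref{sous-sol-lip} then gives the space-Lipschitz bound uniformly in $t$.
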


We are now ready to prove that $u_\e(\cdot,t)$ and $u^\e(\cdot,t)$ converge uniformly
on $\mathcal{A}$ as $t\to +\infty.$ We follow the arguments of~\cite{nr99}
(or alternatively, one may use~\cite[Theorem I.14]{cl83}).
We fix $R>0$ such that $\mathcal{A}\subset B(0,R)$ and consider $t_{\e, R}>0$
given by Lemma~\ref{prop-convol}.
Since $w=u_\e$ or $w=u^\e$ is a locally Lipschitz continuous subsolution
of~\eqref{lc-cauchy} in $B(0,R)\times (t_{\e,R},+\infty),$
we have
\begin{eqnarray}\label{lc-eq-lip123}
&& w_t(x,t)\leq w_t(x,t)+H(x,Dw(x,t))\leq l(x),\quad \text{a.e. $(x,t)$,} 
\end{eqnarray}
since $H\geq 0$ by~\eqref{Hnulle}.
Let $x \in \mathcal{A}$, $t>t_{\e,R},$ and $h,r>0.$  We have
\begin{eqnarray*}
&& \frac{1}{|B(x,r)|}\int_{B(x,r)}(w(y,t+h)-w(y,t))dy \\
&=& \frac{1}{|B(x,r)|}\int_{B(x,r)} \int_t^{t+h} w_t(y,s)dsdy\\
&\leq& \frac{1}{|B(x,r)|} \int_{B(x,r)}  \int_t^{t+h}l(y)dsdy
\end{eqnarray*}
Using the continuity of $w, l$ and $l(x)=0,$ and
letting $r\to 0,$ 
we obtain
\begin{eqnarray}\label{decroiss-eps}
w(x,t+h) \leq w(x,t) \qquad \text{for all $x\in \mathcal{A},$ $t>t_{\e,R},$ $h \geq 0$.}
\end{eqnarray}
Therefore $t\mapsto w(x,t)$ is a nonincreasing function on $[t_{\e,R}, \infty),$
  Lipschitz continuous in space on the compact subset $\mathcal{A}$ (uniformly in time)
  and bounded from below according to~\eqref{ineg-sup-inf241}.
  By Dini Theorem, $w(\cdot ,t)$ converges
uniformly on $\mathcal{A}$ as $t\to +\infty$ to
a Lipschitz continuous function. 
Therefore, there exist Lipschitz continuous functions  $\phi_\e,\phi^\e:\mathcal{A}\to\R$ with
$\phi_\e\leq  \phi^\e$ and
\begin{eqnarray*}
  && u_\e(x,t)\to \phi_\e(x), \quad  u^\e(x,t)\to \phi^\e(x), \quad
  \text{uniformly on $\mathcal{A}$ as $t\to +\infty.$}
\end{eqnarray*}

We now use the previous results to prove the convergence of $u$
on $\mathcal{A}.$
By Lemma~\ref{prop-convol}(i), we first obtain that~\eqref{decroiss-eps} holds for $u.$
Therefore $t\mapsto u(x,t)$ is nonincreasing for $x\in \mathcal{A}$,
so $u(\cdot,t)$ converges pointwise as $t\to +\infty$ to some function
$\phi : \mathcal{A}  \to \R.$ 
Notice that we cannot conclude
to the uniform convergence at this step since we do not know
that $\phi$ is continuous.

We claim that $u_\e (x,t),u^\e (x,t)\to \phi(x)$ as $t\to +\infty$, for all $x\in\mathcal{A}$. 
The proof is similar in both cases so we only provide it for $u_\e (x,t)$.
Let $x\in \mathcal{A}$ and $\overline{s}>0$ be such that
\begin{eqnarray}\label{equa173}
&&   u(x,\overline{s})+\frac{|t-\overline{s}|^2}{\e^2} = u_\e (x,t)\leq u(x,t).
\end{eqnarray}
By~\eqref{ineg-sup-inf241},
we have
\begin{eqnarray*}
  && \frac{|t-\overline{s}|^2}{\e^2}\leq v^+(x)- \inf u_0.
\end{eqnarray*}
It follows that $\overline{s}\to +\infty$ as $t\to +\infty$.
Thanks to the pointwise convergence $u(x,s)\to \phi(x)$ as $s\to +\infty,$
sending $t$ to $+\infty$ in~\eqref{equa173},
we obtain
\begin{eqnarray*}
&& \phi(x)+\mathop{\rm lim\,sup}_{t\to +\infty} \frac{|t-\overline{s}|^2}{\e^2}\leq \phi(x),
\end{eqnarray*}
from which we infer $\mathop{\rm lim}_{t\to +\infty} \frac{|t-\overline{s}|^2}{\e^2}=0$.
Therefore, by~\eqref{equa173}, $u_\e (x,t)\to \phi(x)$. The claim is proved,
which implies $\phi_\e=\phi^\e=\phi$ on $\mathcal{A}$.

At this stage, we can apply here the above formal argument to the locally Lipschitz continuous functions $u^\e$ and $u_\e$, noticing that $\overline{u^\e}$ and $\underline{u_\e}$ are also locally Lipschitz continuous functions. We deduce that
$$
\max_{\R^N} \{\min\{\overline{u^\e},C\}-\underline{u_\e}\} \leq \max_{\mathcal{A}} \{\min\{\overline{u^\e},C\}-\underline{u_\e}\}
=\max_{\mathcal{A}} \{\min\{\phi,C\}-\phi\},
$$
and therefore letting $C$ tend to $+\infty$ we have $\overline{u^\e}=\underline{u_\e}$ in $\R^N$.

Recalling that $\underline{u_\e}
  \leq \underline{u} \leq  \overline{u} \leq \overline{u^\e} $ in $\R^N$, we have also $\underline{u} =  \overline{u}$ in $\R^N$, and the conclusion follows, completing the proof of the case when $u_0$ is bounded.
  
We consider now the case when $u_0$ is only bounded from below (but not necessarely from above). We set
$u_0^C = \min\{u_0,C\}$. If $w$ denotes the solution of~\eqref{lc-cauchy} associated to the initial data $0$, then, because of the Barron-Jensen results, the solution associated to 
$u_0^C$ is $\min\{u,w+C\}$.

But, from the first step, we know that (i) $w$ converges locally uniformly to some solution $v_1$ of~\eqref{lc-erprob}, (ii) $\min\{u,w+C\}$ converges to some solution $v^C_2$ of~\eqref{lc-erprob} (depending perhaps on $C$) and (iii) we have \eqref{borneLinfini} for $u$.

Let $\mathcal{K}$ be any compact subset of $\R^N$. If $C$ is large enough in order to have $v_1+C>v^+$ on $\mathcal{K}$ (the size of such $C$ depends only on $\mathcal{K}$),
  then for large $t,$ $\min\{u,w+C\}=u$ on $\mathcal{K}$ by the uniform convergence of $w$ to $v_1$ on $\mathcal{K}$. It follows that $u$ converges locally uniformly to $v_2^C$ on $\mathcal{K},$
  which is independent on $C$.  The proof of Theorem~\ref{lc-tm:longtime} is complete.
\hfill$\Box$
\bigskip

To conclude this section, we point out the following result which is a consequence of the comparison argument we used in the proof.

\begin{corollary}\label{meme-comport}
Assume~\eqref{Hcoerc}-\eqref{Hnulle}-\eqref{Hconvex}-\eqref{Hlip}, $l\geq 0$ and~\eqref{lc-argmin-l}.
Then, for all bounded from below solutions $(0,v_1)$ and $(0,v_2)$ of~\eqref{lc-erprob},
$v_1, v_2\to +\infty$ as $|x|\to +\infty$ and
\begin{eqnarray*}
  \mathop{\rm sup}_{\R^N}\{v_1-v_2\}\leq \mathop{\rm max}_{\mathcal{A}}\{v_1-v_2\}<+\infty.
\end{eqnarray*}
\end{corollary}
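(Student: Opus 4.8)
The plan is to read off the corollary from the ingredients already set up for Theorem~\ref{lc-tm:longtime}, now applied \emph{directly} to $v_1$ and $v_2$: the point is that, contrary to the half-relaxed limits $\overline{u},\underline{u}$, the functions $v_1,v_2$ are automatically locally Lipschitz continuous by Lemma~\ref{sous-sol-lip}, so the ``formal'' comparison argument becomes rigorous without passing through inf/sup-convolutions.

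First, I would settle the behavior at infinity. By Theorem~\ref{lc-thm:ergodic}(iii), under~\eqref{Hcoerc}-\eqref{Hnulle}, \eqref{Hbornesup} (which holds with $m(r)=C_Hr$ because of~\eqref{Hlip}) and~\eqref{lc-argmin-l}, there is a solution $(0,v_0)$ of~\eqref{lc-erprob} with $v_0\geq 0$ and $v_0(x)\to+\infty$ as $|x|\to+\infty$. By the last assertion of Lemma~\ref{IneqLT}, $v_i-v_0$ is bounded for $i=1,2$; hence $v_1,v_2\to+\infty$ as $|x|\to+\infty$, and $v_1-v_2$ is bounded.

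Next, the inequality. Fix $C>0$. Since $v_1$ is a locally Lipschitz subsolution of~\eqref{lc-erprob} and the constant $C$ is a (classical, hence viscosity) subsolution of~\eqref{lc-erprob} because $H(x,0)=0\leq l(x)$, Lemma~\ref{barron-jensen} — used through the equivalence between~\eqref{lc-erprob} and~\eqref{lc-cauchy} via $V(x,t)=v(x)$ recalled in the proof of Proposition~\ref{lc-exis-uniq-cauchy}(ii) — shows that $\min\{v_1,C\}$ is again a subsolution of~\eqref{lc-erprob}. On the other hand $v_2$ is a supersolution of~\eqref{lc-erprob}, the constant $0$ is a \emph{strict} subsolution of~\eqref{lc-erprob} on $\R^N\setminus\mathcal{A}$ (there $l>0$ by definition of $\mathcal{A}$ while $H(x,0)=0$), and $\min\{v_1,C\}-v_2\leq C-v_2(x)\to-\infty$ as $|x|\to+\infty$. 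One is then exactly in position to run the comparison argument of Ishii~\cite{ishii87a}, the very one already applied to $u^\e$ and $u_\e$ in the proof of Theorem~\ref{lc-tm:longtime}, to obtain
$$ \max_{\R^N}\{\min\{v_1,C\}-v_2\}\leq \max_{\mathcal{A}}\{\min\{v_1,C\}-v_2\}. $$
Letting $C\to+\infty$, the left-hand side is nondecreasing in $C$ and converges to $\sup_{\R^N}\{v_1-v_2\}$, while for $C>\max_{\mathcal{A}}v_1$ the right-hand side equals $\max_{\mathcal{A}}\{v_1-v_2\}$, which is finite since $\mathcal{A}$ is compact and $v_1-v_2$ continuous. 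This yields $\sup_{\R^N}\{v_1-v_2\}\leq\max_{\mathcal{A}}\{v_1-v_2\}<+\infty$, as claimed.

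I expect the only delicate point — and the main, rather mild, obstacle — to be the justification that $\min\{v_1,C\}$ is an admissible competitor, i.e.\ truly a subsolution of~\eqref{lc-erprob}; this is where convexity of $H$ enters, through the Barron–Jensen lemma, exactly as in the proof of Theorem~\ref{lc-tm:longtime}. Everything else is a verbatim repetition of the comparison step carried out there, made cleaner by the genuine local Lipschitz regularity of $v_1$ and $v_2$.
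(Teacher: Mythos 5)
Your proposal is correct and follows essentially the paper's intended argument: the corollary is stated there precisely as a consequence of the Ishii-type localization-to-$\mathcal{A}$ comparison step from the proof of Theorem~\ref{lc-tm:longtime}, which becomes rigorous when applied directly to $\min\{v_1,C\}$ (a subsolution by Lemma~\ref{barron-jensen}) and the locally Lipschitz supersolution $v_2$, with the behavior at infinity coming from Theorem~\ref{lc-thm:ergodic}(iii) and Lemma~\ref{IneqLT}. The only point to phrase carefully is that the last assertion of Lemma~\ref{IneqLT} is used, as in the paper, for bounded-from-below solutions (taking, say, $u_0\equiv 0$ and the associated solution of~\eqref{lc-cauchy}), which is exactly how you apply it.
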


\begin{remark} \label{rmk-perio}It is quite surprising that, though a lot of different solutions
  to~\eqref{lc-erprob} may exist (see Section~\ref{sec:exple-sol-ergo}),
  all the bounded from below solutions associated to $c=-{\rm min}\,l$ have the same growth at infinity. This is not true
  when $\mathcal{A}$ is not compact, e.g., in the periodic case. Consider for instance
\begin{eqnarray*}
&& |Dv|=|{\rm sin}(x)|\quad\text{in $\R.$}    
\end{eqnarray*} 
For $c=-\mathop{\rm min}_{\R}|{\rm sin}(x)|=0,$ it is possible
to build infinitely many solutions with very different behaviors
by gluing some branches of cosine functions, see Figure~\ref{dess-period}.
\begin{figure}[ht]
\begin{center}
\includegraphics[width=14cm]{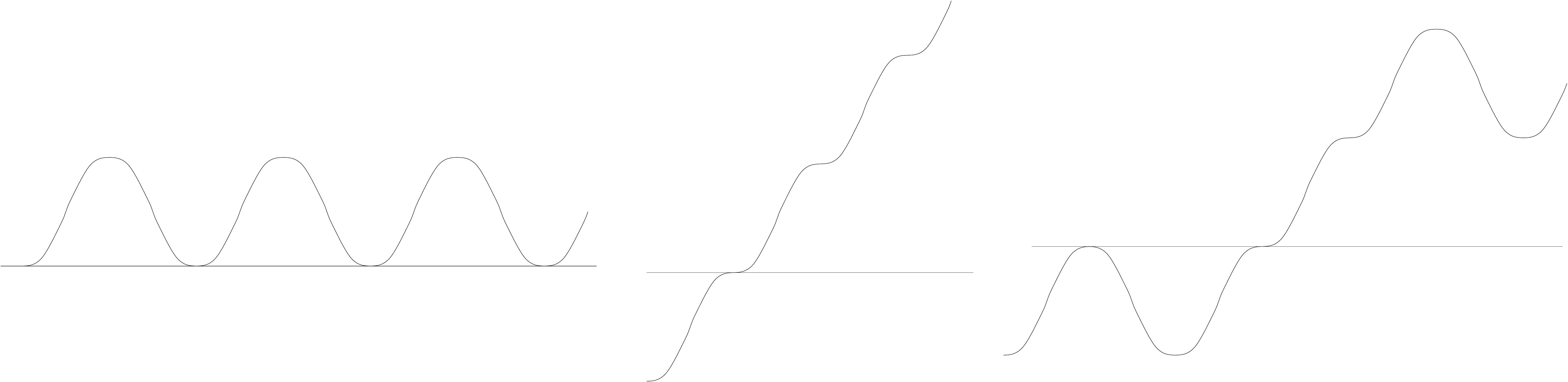}
  \end{center}
\caption{Some solutions of $|Dv|=|{\rm sin}(x)|$ in $\R,$ $\mathcal{A}=\pi\Z$}
\label{dess-period}
\end{figure}
\end{remark}

\subsection{Proof of Theorem~\ref{lc-tm:longtime-infty}}
By~\eqref{u0-v-to0}, there exists a subsolution $(0,\psi)$ of~\eqref{lc-erprob} such that, for every
$\epsilon >0,$ there exists $R_\e >0$ such that, for all  $|x|\geq R_\e,$
\begin{eqnarray}\label{u0vproches}
  &&  {\rm min} \{\psi(x),  v(x)\}-\e \leq  {\rm min} \{\psi(x),  u_0(x)\}
  \leq  {\rm min} \{\psi(x),  v(x)\}+\e.
\end{eqnarray}
Let $M_\e >0$ be such that
\begin{eqnarray*}
&& -M_\e\leq u_0(x), v(x) \qquad \text{for all $|x|\leq R_\e.$}
\end{eqnarray*}
Setting $\psi_\e(x)=  {\rm min} \{\psi(x), -M_\e\},$ we claim that, for all $x\in\R^N,$
\begin{eqnarray}\label{ineg602}
  && {\rm min} \{\psi_\e(x),  v(x)\} -\e
  \leq {\rm min} \{\psi_\e(x),  u_0(x)\}
  \leq  {\rm min} \{\psi_\e(x),  v(x)\} +\e.
\end{eqnarray}
Indeed, this inequality comes from the $M$-property~\eqref{u0vproches} if $|x|\geq R_\e,$ while it is obvious by the choice of $M_\e,$ if $|x|\leq R_\e.$

From Lemma~\ref{barron-jensen},
$(0,\psi_\e)$ is a subsolution of~\eqref{lc-erprob} as a minimum of subsolutions.
Since $c\geq 0$, $(c,\psi_\e)$ is also subsolution of~\eqref{lc-erprob}.
Applying again Lemma~\ref{barron-jensen} to $(c,\psi_\e)$ and $(c,v)$,
we obtain that $(c,{\rm min} \{\psi_\e,  v\})$ is a subsolution of~\eqref{lc-erprob}. 
From~\eqref{ineg602}, we have
${\rm min} \{\psi_\e,  v\} -\e\leq u_0$. It follows from
Proposition~\ref{lc-exis-uniq-cauchy}, that there
exists a unique viscosity solution $u$ of~\eqref{lc-cauchy}
with initial data $u_0$ and it satisfies
\begin{eqnarray}\label{ineg693}
{\rm min} \{\psi_\e,  v\} -\e\leq u(x,t)+ct\leq v^+(x),
\end{eqnarray}
where $(c,v^+)$ is a supersolution of~\eqref{lc-erprob} such that $u_0\leq v^+$.

In the same way, there exists unique viscosity solutions $w_\e$ and $w$ of~\eqref{lc-cauchy}
associated with initial datas $\psi_\e$ and 0 respectively.
Since $\psi_\e\leq -M_\e$, by comparison and Proposition~\ref{lc-exis-uniq-cauchy},
we have
\begin{eqnarray}\label{ineg694}
  && \psi_\e(x)\leq w_\e(x,t)\leq -M_\e +w(x,t)\leq \tilde{v}^+(x)
  \quad \text{for $(x,t)\in\R^N\times [0,+\infty)$,}
\end{eqnarray}
where $(0,\tilde{v}^+)$ is a supersolution of~\eqref{lc-erprob} such that $-M_\e\leq \tilde{v}^+$.

Arguing as at the end of the proof of Theorem~\ref{lc-tm:longtime}, the solutions of~\eqref{lc-cauchy} associated to the initial datas
$$
{\rm min} \{\psi_\e(x),  v(x)\} -\e, \quad
{\rm min} \{\psi_\e(x),  u_0(x)\}
\quad\text{and}\quad
{\rm min} \{\psi_\e(x),  v(x)\} +\e
$$
are respectively
$$
{\rm min} \{w_\e(x,t),  v(x)-ct\} -\e,
\ {\rm min} \{w_\e(x,t),  u(x,t)\}
\ \text{and}\
{\rm min} \{w_\e(x,t),  v(x)-ct\} +\e.
$$

By comparison, we have, in $\R^N\times (0,+\infty)$
$$
{\rm min} \{w_\e(x,t),  v(x)-ct\} -\e \leq {\rm min} \{w_\e(x,t),  u(x,t)\}\; ,$$
and
$$ {\rm min} \{w_\e(x,t),  u(x,t)\} \leq  {\rm min} \{w_\e(x,t),  v(x)-ct\} +\e.
$$
Recalling that $c$ is positive and using~\eqref{ineg694}, if $\mathcal{K}$ is a compact subset of $\R^N$, then
for $t$ large enough and $x \in \mathcal{K}$
$$ {\rm min} \{w_\e(x,t),  v(x)-ct\} = v(x)-ct\; ,$$
leading to the inequality
$$v(x)-ct -\e \leq {\rm min} \{w_\e(x,t),  u(x,t)\} \leq v(x)-ct +\e.
$$
From~\eqref{ineg693} and~\eqref{ineg694},
$t$ can be chosen large enough to have $w_\e(x,t) +ct>u(x,t)+ct $
so we end up with
$$ v(x)-ct -\e \leq u(x,t) \leq v(x)-ct +\e,$$
for $t$ large enough and $x$ in $\mathcal{K}$. Since $\e$ is arbitrary, the conclusion follows.\hfill$\Box$

\begin{remark}\label{rmqII}
Theorem~\ref{lc-tm:longtime-infty} is very close to \cite[Theorem 5.3]{ii09}. In the latter paper, the authors
obtain the convergence assuming that $\mathop{\rm inf}_{\R^N}\{u_0-{\rm min}\{\psi,v \}\}> -\infty$ and
\begin{eqnarray}\label{hypII}
\mathop{\rm lim}_{r\to +\infty}\{|(u_0-v)(x)| : \psi(x)>v(x)+r\}=0.
\end{eqnarray}
We do not know if this assumption is equivalent to ours. But in both assumptions,
the point is that $u_0(x)$ must be close to $v(x)$ when
$v(x)$ is ``far below'' $\psi(x)$, which means
$\psi(x)>v(x)+r$ for large $r$ in~\eqref{hypII} and
${\rm min}\{\psi(x), -r\}> v(x)$ for large $r$ in our case.
This situation occurs for instance if $v,u_0$ are unbounded
from below and close when $v(x)\to -\infty$.
\end{remark}


\section{Optimal control problem and examples}
\label{lc-sec:exple}

Consider the one-dimensional Hamilton-Jacobi Equation
\begin{eqnarray}\label{lc-exp}
\begin{cases}
u_t(x,t) + |Du(x,t)| = 1+ |x| \qquad \text{in } \R \times (0, \infty)\\
u(x,0) = u_0(x),
\end{cases}
\end{eqnarray}
where $l(x)=|x|+1\geq 0,$ $\min_{\R^N} l= 1,$ and $\mathcal{A}={\rm argmin}\,l=\{0\}$
satisfies~\eqref{lc-argmin-l}. We can come back to our framework by looking at $\tilde u (x,t)=u (x,t)-t$ which solves
$$\tilde u_t(x,t) + |D\tilde u(x,t)| =  |x| \qquad \text{in } \R \times (0, \infty)\; ,$$
where $\tilde l(x):= |x|$ satisfies the assumptions of our results.

There exists a unique continuous solution $u$ of~\eqref{lc-exp} for every continuous
$u_0$ satisfying $|u_0(x)|\leq C(1+|x|^2)$ (use Theorem~\ref{lc-compathm} and the fact
that $\pm Ke^{Kt}(1+|x|^2)$ are super- and subsolution for large $K$).

We can represent $u$ as the value function of the following associated deterministic optimal
control problem. Consider the controlled ordinary differential equation
\begin{eqnarray}\label{lc-expcontrol}
\begin{cases}
\dot{X}(s) = \alpha(s),\\
X(0) = x \in \R,
\end{cases}
\end{eqnarray}
where the control $\alpha(\cdot)\in L^\infty([0,+\infty); [-1,1])$
(i.e.,  $|\alpha(t)| \leq 1$ a.e. $t\geq 0$).
For any given control $\alpha$,~\eqref{lc-expcontrol} has a unique solution 
$X(t) =X_{x,\alpha (\cdot)}= x + \int_0^t \alpha(s)ds$.
We define the cost functional
$$J(x,t,\alpha) = \int_0^t(|X(s)| + 1)ds + u_0(X(t)),$$
and the value function 
$$V(x,t) = \inf_{\alpha\in L^\infty([0,+\infty); [-1,1])} J(x,t,\alpha).$$
It is classical to check that $V(x,t)=u(x,t)$ is the unique viscosity solution of~\eqref{lc-exp},
see~\cite{barles94, bcd97}.

\subsection{Solutions to the ergodic problem}
\label{sec:exple-sol-ergo}

There are infinitely many essentially different solutions with
different constants to the associated
ergodic problem
\begin{eqnarray}\label{exple-ergo}
|Dv(x)|=1+|x|+c \qquad \text{in } \R.
\end{eqnarray}
Define $S(x)=\int_0^x |y|dy.$
The following pairs $(c,v)$ are solutions.
\begin{itemize}

\item $(-1, \frac{1}{2}x^2)$ and $(-1, -\frac{1}{2}x^2).$ They are bounded from below 
(respectively from above) with $c=-\min\,l$;

\item $(-1,S(x))$ and  $(-1,-S(x)).$ They are neither bounded from below nor from above and
$c=-\min\,l$;

\item $(\lambda -1, \lambda x+S(x))$ and $(\lambda -1, -\lambda x-S(x))$
for every $\lambda >0.$ They are 
neither bounded from below nor from above and
$c>-\min\,l$;

\item $(\lambda -1, -\frac{1}{2}x^2-\lambda|x|)$ for every $\lambda >0.$
  These solutions are nonsmooth (notice that $-v$ is not anymore a viscosity
  solution), they are not bounded from below. Actually, they are the
  solutions obtained by the constructive proof of Theorem~\ref{lc-thm:ergodic}(i).
  Indeed, the unique solution $V_R$ of the Dirichlet problem~\eqref{pb-dirichl}
  is $V_R(x)= \frac{R^2-x^2}{2}+\lambda(R-|x|)$ for $x\in [-R,R],$
  leading to $v(x)={\rm lim}_{R\to \infty}\{V_R(x)-V_R(0)\}= -\frac{1}{2}x^2-\lambda|x|.$

\item $(c,v)$ where $(c,v_1)$ and $(c,v_2)$ are solutions, 
  $v=\min \{v_1+C_1, v_2+C_2\}$ and $C_1,C_2\in\R.$
  This is a consequence of Lemma~\ref{barron-jensen}.
  
\end{itemize}

\subsection{Equation~\eqref{lc-exp} with $u_0(x)=S(x)$}
\label{lc-exple:S}

For any solution $(c,v)$ to~\eqref{exple-ergo}, it is obvious that
$u(x,t)= -ct+v(x)$ is the unique solution to~\eqref{lc-exp} with $u_0(x)=v(x)$
and the convergence holds, i.e., $u(x,t)+ct\to v(x)$ as $t\to +\infty.$
In particular, if $u_0(x)=S(x),$ the solution of~\eqref{lc-exp} is $u(x,t)= t+S(x).$

Let us find in another way the solution by computing the value function
of the control problem stated above. Let $t>0.$ We compute $V(x,t)$
for any $x\in\R$ by determining the optimal controls and trajectories.

\medskip
\noindent{\it 1st case:} $x\geq 0.$\\
There are infinitely many optimal strategies: they consist
in going as quickly as possible to 0 ($={\rm argmin}\,l$), to wait at 0
for a while and to go as quickly as possible towards $-\infty.$
For any $0\leq \tau \leq t-x,$ it corresponds to the optimal controls
and trajectories
\begin{eqnarray}\label{lc-optS}
&& \scriptsize\alpha(s)= \left\{\begin{array}{cl} 
-1, & 0\leq s\leq x,\\
0, & x\leq s\leq x+\tau,\\
-1, & x+\tau\leq s\leq t,
\end{array}\right.
\qquad
\scriptsize X(s)= \left\{\begin{array}{cl} 
x-s, & 0\leq s\leq x,\\
0, & x\leq s\leq x+\tau,\\
-(s-x-\tau), & x+\tau\leq s\leq t.
\end{array}\right.
\end{eqnarray}
They lead to $V(x,t)=J(x,t,\alpha)=t+S(x).$
Among these optimal strategies, there are two of particular interest:

\begin{itemize}
\item The first one is to go as quickly as possible to 0
and to remain there ($\tau=t-x$). This strategy is typical of
what happens in the periodic case: the optimal trajectories
are attracted by $\mathcal{A}={\rm argmin}\,l.$

\item The second one is to go as quickly as possible towards $-\infty$
during all the available time $t$ ($\tau=0$). This situation is very different
to the periodic case. Due to the unbounded (from below) final
cost $u_0,$ some optimal trajectories are not anymore atttracted
by ${\rm argmin}\,l$ and are unbounded.
\end{itemize}

\medskip
\noindent{\it 2nd case:} $x< 0.$\\
In this case there is not anymore bounded optimal trajectories.
The only optimal strategy is to go as quickly as possible
towards $-\infty.$
The optimal control are $\alpha(s)=-1,$ $X(s)=x-s$
for $0\leq s\leq t$ leading to $V(x,t)=J(x,t,-1)=t+S(x).$

The analysis of this case in terms of control will
help us for the following examples.

\subsection{Equation~\eqref{lc-exp} with $u_0(x)=\frac{1}{2}x^2+b(x)$ with $b$ bounded from below}
\label{lc-exple:carreb}

To illustrate  Theorem~\ref{lc-tm:longtime}, we choose an initial condition
which is a bounded perturbation
of a bounded from below solution of the ergodic problem.
To simplify the computations, we choose a periodic perturbation $b$.

For any $x$, an optimal strategy can be chosen among
those described in Example~\ref{lc-exple:S}. More precisely:
go as quickly as possible to 0,
wait nearly until time $t$ and move a little to reach the minimum
of the periodic perturbation.
For $t$ large enough (at least $t>x$), we compute the cost with
$\alpha, X$ given by~\eqref{lc-optS},
$$
J(x,t,\alpha)= t+\frac{1}{2}x^2+b(-t+x+\tau).
$$
For every $t$ large enough, there exists $0\leq \tau=\tau_t <t-x$ such that
$b(-t+x+\tau_t)=\min \, b.$ It leads to
$$
V(x,t)= J(x,t,\alpha)= t+\frac{1}{2}x^2 + \min \, b.
$$
Therefore, we have the convergence as announced in Theorem~\ref{lc-tm:longtime}.

\subsection{Equation~\eqref{lc-exp} with $u_0(x)=S(x)+b(x)$ with $b$ bounded Lipschitz continuous}

We compute the value function as above. Due to the
unboundedness from below of $u_0$ we need to distinguish
the cases $x\geq 0$ and $x<0$ as in Example~\ref{lc-exple:S}.

\medskip
\noindent{\it 1st case:} $x\geq 0.$\\
We use the same strategy as in Example~\ref{lc-exple:carreb}
leading to $V(x,t)= J(x,t,\alpha)= t+\frac{1}{2}x^2 + \min \, b.$

\medskip
\noindent{\it 2nd case:} $x< 0.$\\
In this case, the optimal strategy suggested by Examples~\ref{lc-exple:S} and~\ref{lc-exple:carreb}
is to start by waiting a small time $\tau$ before going as quickly as possible towards $-\infty.$
The waiting time correspond to an attempt to reach a minimum of $b$ at the
left end of the trajectory.
It corresponds to the control and trajectory
\begin{eqnarray*}
&&\scriptsize\alpha(s)= \left\{\begin{array}{cl} 
0, & 0\leq s\leq \tau,\\
-1, & \tau\leq s\leq t,
\end{array}\right.
\qquad
\scriptsize X(s)= \left\{\begin{array}{cl} 
x, & 0\leq s\leq \tau,\\
x-(s-\tau), & \tau\leq s\leq t,
\end{array}\right.
\end{eqnarray*}
leading to
$$
J(x,t,\alpha)=t+S(x)+\tau |x|+b(x-t+\tau).
$$
Due to the boundedness of $b,$ in order to be optimal,
we see that necessarily $\tau=O(1/|x|)$
to keep bounded the positive term $\tau |x|$ in $J(x,t,\alpha)$. 
So, for large $|x|,$ $x<0,$ we have
$b(x-t+\tau)\approx b(x-t).$
When $b$ is not constant,
$b(x-t)$ has no limit as $t\to +\infty,$
the convergence for $V(x,t)$
cannot hold.

In this case, $u_0$ is a bounded perturbation of
a solution $(c,v)=(-1,S(x))$ of the ergodic problem
with $c = -\min \,l$ but $v$ is not bounded from
below and the convergence of the value function may not hold.
It follows that the assumptions of Theorem~\ref{lc-tm:longtime}
cannot be weakened
easily. In particular, 
the boundedness from below of the 
solution of the ergodic problem seems to be crucial.

\subsection{Equation~\eqref{lc-exp} with $u_0(x)=S(x)+x+{\rm sin}(x)$}

The
solution of~\eqref{lc-exp} is $u(x,t)=S(x)+x+{\rm sin}(x-t).$
Clearly, we do not have the convergence. In this case,
$u_0$ is a bounded perturbation of the solution $(0, S(x)+x)$
of the ergodic problem with
$c> -\min \,l$ and $S(x)+x-u_0(x)\not\to 0$ as $x\to -\infty$
(where $S(x)+x\to -\infty$).
This example shows that the convergence in Theorem~\ref{lc-tm:longtime-infty}
may fail when~\eqref{u0-v-to0} does not hold.

\appendix
\section{Comparison principle for the solutions of~\eqref{lc-cauchy}}

The comparison result for the unbounded solutions of~\eqref{lc-cauchy}
is a consequence of a general comparison result for first-order
Hamilton-Jacobi equations which holds without growth conditions
at infinity.

\begin{theorem} \label{lc-compathm} 
\cite{ishii84, ley01}
Assume that $H$ satisfies~\eqref{Hlip} and that
$u \in USC(\R^N \times [0, T])$ and $v \in LSC(\R^N \times [0, T])$
are respectively a subsolution of~\eqref{lc-cauchy} with initial data $u_0\in C(\R^N)$
and a supersolution of~\eqref{lc-cauchy} with initial data $v_0\in C(\R^N).$
Then, for every $x_0\in\R^N$ and $r>0,$
\begin{eqnarray*}
&& u(x,t) - v(x,t) \leq \sup_{\overline{B}(x_0,r)}\{u_0(y) - v_0(y) \}
\quad \text{for every $(x,t) \in \bar{\mathcal{D}}(x_0,r)$,}
\end{eqnarray*}
where
$$
\bar{\mathcal{D}}(x_0,r) = \lbrace (x,t) \in B(x_0,r) \times (0,T): e^{C_H T}(1 + |x-x_0|) - 1 \leq r \rbrace.
$$
\end{theorem}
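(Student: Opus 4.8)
The result is the standard \emph{finite speed of propagation} (domain of dependence) estimate for convex-coercivity-free first-order equations, and the plan is a localized doubling-of-variables argument that never uses any information on $u,v,u_0,v_0$ outside a fixed ball. First I would reduce: translating, assume $x_0=0$; set $M:=\sup_{\overline B(0,r)}\{u_0-v_0\}$, and note that if $M=+\infty$ there is nothing to prove, so assume $M<+\infty$. Since $e^{C_H t}\le e^{C_H T}$ for $t\le T$, the set $\bar{\mathcal D}(x_0,r)$ is contained in the open truncated exponential cone $\mathcal C:=\{(x,t):\ 0\le t<T,\ (1+|x|)e^{C_H t}<1+r\}$, so it suffices to prove $u-v\le M$ on $\mathcal C$. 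The closure $\overline{\mathcal C}$ is \emph{compact}, hence on $\overline{\mathcal C}$ the function $u$ is bounded above and $v$ bounded below; this is what replaces global growth control. I would argue by contradiction: suppose $u(x_*,t_*)-v(x_*,t_*)>M$ at some $(x_*,t_*)\in\mathcal C$ with $t_*>0$ (the possibility $t_*=0$ is excluded directly by $u(\cdot,0)\le u_0$, $v(\cdot,0)\ge v_0$ on $\overline B(0,r)$).

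\textbf{The test function.} For small $\delta>0$ and $\e>0$ I would maximize, doubling both space and time (so as to be able to write the viscosity inequalities; alternatively one invokes the parabolic theorem on sums),
$$\Phi_{\delta,\e}(x,y,t,s):=u(x,t)-v(y,s)-\frac{|x-y|^2}{2\e}-\frac{|t-s|^2}{2\e}-h_\delta(x,t)-h_\delta(y,s)-\frac{\delta}{T-t}-\frac{\delta}{T-s},$$
$$h_\delta(z,\sigma):=\frac{\delta}{(1+r)-(1+|z|)e^{C_H\sigma}}\ \ \ \big((z,\sigma)\in\mathcal C\big),$$
over $\{(x,t),(y,s)\in\mathcal C\}$. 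The penalization $h_\delta$ blows up on the lateral boundary $\{(1+|z|)e^{C_H\sigma}=1+r\}$ and $\delta/(T-\cdot)$ blows up at $\sigma=T$, while on $\overline{\mathcal C}$ the functions $u,v$ are bounded; hence $\Phi_{\delta,\e}$ attains its maximum at an interior point $(\bar x,\bar y,\bar t,\bar s)$ with $\bar t,\bar s<T$. Comparing the maximal value with $\Phi_{\delta,\e}(x_*,x_*,t_*,t_*)$, which for $\delta$ small exceeds $M+\theta$ for a fixed $\theta>0$ independent of $\e$, whereas the supremum over $\{\bar t=0\}\cup\{\bar s=0\}$ is $\le M+o_\e(1)$ (using $\sup_{x,y\in\overline B(0,r)}\{u_0(x)-v_0(y)-|x-y|^2/2\e\}\to M$ and continuity of $u_0,v_0$), I conclude $\bar t,\bar s\in(0,T)$ for $\e$ small. (The non-smoothness of $z\mapsto|z|$ at the origin is handled by the routine device of replacing $|z|$ by $\sqrt{|z|^2+\tau^2}$ and letting $\tau\to0$ at the end, which only enlarges the cone; I suppress this.)

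\textbf{Viscosity inequalities and the key absorption.} Writing the subsolution inequality for $u$ at $(\bar x,\bar t)$ with test gradient $\bar p+Dh_\delta(\bar x,\bar t)$, $\bar p:=(\bar x-\bar y)/\e$, and the supersolution inequality for $v$ at $(\bar y,\bar s)$ with test gradient $\bar p-Dh_\delta(\bar y,\bar s)$ (the equal common time-adjoint variable $(\bar t-\bar s)/\e$ cancels on subtraction), and then using \eqref{Hlip} on a fixed ball containing $\overline{\mathcal C}$, I would arrive, after rearranging, at
$$k_R\big(1+|\bar p|+|Dh_\delta(\bar y,\bar s)|\big)\,|\bar x-\bar y|+C_H\big(|Dh_\delta(\bar x,\bar t)|+|Dh_\delta(\bar y,\bar s)|\big)\ \ge\ l(\bar y)-l(\bar x)+\partial_t h_\delta(\bar x,\bar t)+\partial_s h_\delta(\bar y,\bar s)+\frac{\delta}{(T-\bar t)^2}+\frac{\delta}{(T-\bar s)^2}.$$
The whole point of the choice of $h_\delta$ is the identity $\partial_\sigma h_\delta(z,\sigma)-C_H|Dh_\delta(z,\sigma)|=\delta C_H e^{C_H\sigma}\,|z|\,\big[(1+r)-(1+|z|)e^{C_H\sigma}\big]^{-2}\ge0$, which lets the $C_H|Dh_\delta|$ terms on the left (i.e.\ exactly the ``bad'' $C_H|p-q|$ contribution of \eqref{Hlip}) be absorbed into the $\partial_\sigma h_\delta$ terms on the right; this is precisely what forces the cone to be the $e^{C_H t}$-cone. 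One is left with
$$k_R\big(1+|\bar p|+|Dh_\delta(\bar y,\bar s)|\big)\,|\bar x-\bar y|\ \ge\ l(\bar y)-l(\bar x)+\frac{\delta}{(T-\bar t)^2}+\frac{\delta}{(T-\bar s)^2}.$$
Letting $\e\to0$ (along the relevant sequence): $|\bar x-\bar y|\to0$, $|\bar p|\,|\bar x-\bar y|=|\bar x-\bar y|^2/\e\to0$, $\bar x,\bar y\to\hat x$, $\bar t,\bar s\to\hat t$ with $\hat t\le T-\delta/C<T$, and $|Dh_\delta(\bar y,\bar s)|$ stays bounded (because $h_\delta(\bar y,\bar s)$ is bounded at the maximum, so the denominator is bounded away from $0$). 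The left side tends to $0$, $l(\bar y)-l(\bar x)\to0$ by continuity of $l$, and we get $0\ge 2\delta/(T-\hat t)^2>0$, a contradiction. Hence $u-v\le M$ on $\mathcal C$, which gives the claim on $\bar{\mathcal D}(x_0,r)$.

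\textbf{Expected main obstacle.} The delicate point is not the viscosity manipulation itself but designing a \emph{single} penalization $h_\delta$ that does two jobs at once: it must confine the maximum of $\Phi_{\delta,\e}$ strictly inside the cone (this is the finite speed of propagation, and is exactly what makes the estimate local, so that only the values of $u_0-v_0$ on $\overline B(x_0,r)$ — not the a priori uncontrolled values outside — ever enter the argument), and it must satisfy the precise derivative balance $\partial_\sigma h_\delta\ge C_H|Dh_\delta|$ that cancels the only obstructive term generated by \eqref{Hlip}. These two requirements together pin down both the shape of the penalization and the shape of the cone. The remaining ingredients — attainment of the maximum, excluding $\bar t=0$ or $\bar s=0$ via the initial data, the $|x|$-kink at the center, and the standard $\e\to0$ asymptotics of the penalization — are routine.
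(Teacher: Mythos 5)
The paper gives no proof of this theorem — it is quoted from the cited references \cite{ishii84, ley01} — and your argument is exactly the standard finite–speed–of–propagation proof found there: doubling of variables localized by the exponential-cone penalization $h_\delta$, whose derivative balance $\partial_t h_\delta \geq C_H |Dh_\delta|$ absorbs precisely the $C_H|p-q|$ term of \eqref{Hlip}, with the $\delta/(T-t)$ term and the initial data handling the top and bottom of the cone. Your computations (the inclusion $\bar{\mathcal{D}}(x_0,r)\subset\mathcal{C}$, the absorption identity, boundedness of $Dh_\delta$ at the maximum point, the $\varepsilon\to 0$ limit) are correct, and the steps you defer (smoothing the kink of $|z|$ at the origin, the $\bar t=0$ or $\bar s=0$ estimate using $u(\cdot,0)\le u_0$, $v(\cdot,0)\ge v_0$ and semicontinuity on the compact ball) are indeed routine, so the proof is correct and essentially the intended one.
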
 
When $\sup_{\R^N} \{u_0-v_0\} < +\infty$, a straightforward consequence is
\begin{eqnarray*}
&& u(x,t) - v(x,t) \leq \sup_{\R^N}\{u_0 - v_0 \}
\quad \text{for every $(x,t) \in \R^N\times [0,+\infty)$.}
\end{eqnarray*}

\section{Barron-Jensen solutions of convex HJ equations}

\begin{theorem} \label{thm-bj}
  Assume that $H$ satisfies~\eqref{Hconvex} and~\eqref{Hlip}. Then $u\in W_{\rm loc}^{1,\infty}(\R^N\times (0,+\infty))$
  is a viscosity
  solution (respectively subsolution) of~\eqref{lc-cauchy} if and ony if it is a Barron-Jensen solution (respectively subsolution)
  of~\eqref{lc-cauchy}, i.e.,
  for every $(x,t)\in \R^N\times (0,+\infty)$ and $\varphi\in C^1(\R^N\times (0,+\infty))$ such that
  $u-\varphi$ has a local minimum at $(x,t),$ one has
  \begin{eqnarray*}
&& \varphi_t(x,t)+ H(x,D\varphi(x,t))=l(x) \ \ \text{(respectively $\leq l(x)$)}.
  \end{eqnarray*}
\end{theorem}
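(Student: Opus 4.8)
The plan is to prove Theorem~\ref{thm-bj}, which identifies viscosity solutions/subsolutions with Barron-Jensen solutions/subsolutions for convex Hamiltonians in the locally Lipschitz class. The key point is that for convex $H$, testing with functions touching from below is equivalent to testing from above, once we know the solution is locally Lipschitz. First I would treat the subsolution case. The easy direction is: a Barron-Jensen subsolution is a viscosity subsolution. Here one notes that if $u-\varphi$ has a local max at $(x,t)$ with $u\in W^{1,\infty}_{\rm loc}$, then since $u$ is differentiable a.e.\ (Rademacher), one picks a sequence $(x_n,t_n)\to(x,t)$ of differentiability points of $u$; at each such point the classical inequality $\partial_t u+H(x,Du)\leq l$ holds a.e.\ (because $u$ is a viscosity subsolution, hence the inequality holds at a.e.\ point of differentiability, a standard fact), and one has $D_{x,t}u(x_n,t_n)\to D_{x,t}\varphi(x,t)$ along a suitable subsequence using that $(x,t)$ is a local max of $u-\varphi$ together with local Lipschitz bounds; passing to the limit and using continuity of $H,l$ gives the viscosity inequality at $(x,t)$. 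Actually the cleaner route: a locally Lipschitz function that is a.e.\ a subsolution of a convex (in $p$) HJ equation is a viscosity subsolution — this is a classical lemma (see e.g.\ Barron-Jensen, or Barles's book), and conversely a viscosity subsolution satisfies the equation a.e.; so both notions of subsolution coincide with "a.e.\ subsolution" in the Lipschitz class. This disposes of the subsolution equivalence.

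For the solution case I would proceed as follows. A Barron-Jensen solution is in particular a Barron-Jensen subsolution, hence by the above a viscosity subsolution. It remains to show it is a viscosity supersolution, i.e.\ the inequality $\ge l(x)$ holds when testing from below by $C^1$ functions. But this is exactly built into the Barron-Jensen notion (which uses test functions from below and imposes equality), so that direction is immediate. For the converse, suppose $u\in W^{1,\infty}_{\rm loc}$ is a viscosity solution. It is then a viscosity subsolution, hence (as above) an a.e.\ subsolution, hence a Barron-Jensen subsolution. The content is to upgrade this to the Barron-Jensen equality: given $\varphi\in C^1$ with $u-\varphi$ attaining a local min at $(x,t)$, we must show $\partial_t\varphi(x,t)+H(x,D\varphi(x,t))=l(x)$; the $\le$ direction follows because a locally Lipschitz viscosity supersolution of a convex HJ equation, tested from below, actually gives the $\le$ inequality — this is the Barron-Jensen regularity phenomenon. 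The mechanism: by the viscosity supersolution property, $\partial_t\varphi+H(x,D\varphi)\ge l$ at $(x,t)$; for the reverse $\le$, one uses that along a.e.\ ray the solution is "semiconcave-like" or, more robustly, one invokes that a Lipschitz viscosity solution of a convex equation satisfies $\partial_t u(y,s)+H(y,Du(y,s))=l(y)$ at \emph{every} point $(y,s)$ of differentiability and, at a point where $u-\varphi$ has a min from below, differentiability of $\varphi$ forces $Du$ to exist there with $D_{x,t}u=D_{x,t}\varphi$ — this last step requires the convexity of $H$ (it is false for nonconvex $H$) and is the genuine Barron-Jensen lemma. I would cite~\cite{bj90} for this and give the short argument: near the min point, using convexity, one can show $u$ is differentiable at $(x,t)$ with gradient $D\varphi(x,t)$, then apply the a.e.\ equation.

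The main obstacle is precisely this last upgrade — proving that at a point of contact from below the locally Lipschitz viscosity solution is actually differentiable with gradient equal to that of the test function, which is where convexity~\eqref{Hconvex} is essential and which is the heart of the Barron-Jensen theory. Everything else (Rademacher, stability of viscosity inequalities under $C^1$ perturbation, the a.e.\ characterization of Lipschitz viscosity sub/supersolutions) is routine. One technical care point: the equation~\eqref{lc-cauchy} has the spatial-only source $l(x)$ and $H=H(x,p)$ with no explicit time dependence, so the convexity in the full gradient $(\partial_t,D_x)$ variable is inherited from convexity in $p$ (the $\partial_t$ slot enters linearly), which is what makes the Barron-Jensen machinery applicable to the evolution equation; I would note this reduction explicitly at the start. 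Given the length constraints and that this is an appendix result, I would keep the proof to the two equivalences above, citing~\cite{bj90, barles94} for the standard lemmas and only spelling out the convexity-driven differentiability step.
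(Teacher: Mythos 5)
First, a point of comparison: the paper does not prove Theorem~\ref{thm-bj} at all --- it simply attributes it to Barron--Jensen~\cite{bj90} and refers to~\cite[p.~89]{abil13} --- so your proposal has to stand on its own, and as written it has a genuine gap exactly at the heart of the matter. The whole content of the theorem is the implication ``viscosity (equivalently a.e.) subsolution $\Rightarrow$ the inequality $\varphi_t(x,t)+H(x,D\varphi(x,t))\le l(x)$ at every local minimum of $u-\varphi$'', i.e.\ the propagation of the subsolution inequality to every element of the subdifferential $D^-u(x,t)$. In your subsolution paragraph you claim that ``both notions coincide with a.e.\ subsolution'', but you only justify BJ $\Rightarrow$ a.e.\ (differentiability points) and a.e.\ $\Rightarrow$ viscosity (mollification plus convexity); the direction a.e.\ $\Rightarrow$ BJ is never argued there, so the subsolution equivalence is \emph{not} disposed of. (Also, your first argument for BJ $\Rightarrow$ viscosity is circular as written: it invokes ``because $u$ is a viscosity subsolution''.) When you finally face the missing direction in the solution case, the mechanism you propose --- that at a point where $u-\varphi$ has a local minimum the Lipschitz solution must be differentiable with $D_{x,t}u=D_{x,t}\varphi$ --- is false under the stated hypotheses \eqref{Hconvex}, \eqref{Hlip}, which require convexity but not strict convexity (recall that the paper's model Hamiltonian $a(x)|p|$ is itself not strictly convex). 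Counterexample: take $H(x,p)=\max\{|p|-1,0\}$ and $l\equiv 0$, which satisfy \eqref{Hconvex} and \eqref{Hlip}; then $u(x,t)=|x|$ is a Lipschitz viscosity solution of~\eqref{lc-cauchy} (the superdifferential is empty at $x=0$ and $H\ge 0=l$ everywhere), yet at $(0,t)$ the subdifferential $D^-u(0,t)=\{0\}\times\overline{B}(0,1)$ is nonempty while $u$ is not differentiable there. Your parenthetical ``false for nonconvex $H$'' misdiagnoses the issue: differentiability at points where $D^-u\ne\emptyset$ is a \emph{strict}-convexity (semiconcavity-type) phenomenon and is simply unavailable here.

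The correct route for the crucial step --- and the one behind~\cite{bj90} --- avoids any differentiability claim: since $u$ is locally Lipschitz and a viscosity subsolution, the inequality $u_t+H(x,Du)\le l$ holds at a.e.\ point of differentiability; the map $(p_t,p)\mapsto p_t+H(x,p)$ is convex by \eqref{Hconvex} (linearity in $p_t$, as you note); and $D^-u(x,t)$ is contained in Clarke's generalized gradient, i.e.\ in the closed convex hull of limits of gradients of $u$ at nearby points of differentiability. By continuity of $H$ and $l$ (using \eqref{Hlip}) the inequality passes to such limits, and by convexity it passes to convex combinations, hence to every element of $D^-u(x,t)$. This gives ``viscosity subsolution $\Rightarrow$ BJ subsolution''; combined with the (immediate) supersolution half at minima one gets the BJ equality for solutions, and the converse directions are the easy ones you did handle (BJ $\Rightarrow$ a.e.\ $\Rightarrow$ viscosity subsolution by mollification, and the $\ge$ half of the BJ equality is the supersolution property). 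With this replacement your overall architecture is fine; as written, however, the proof of the only nontrivial implication is incorrect.
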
 

This result is due to Barron and Jensen~\cite{bj90}
and we refer to Barles~\cite[p. 89]{abil13}.
Lemmas~\ref{prop-convol}(iii) and~\ref{barron-jensen}
are consequences of this theorem.

As far as Lemma~\ref{prop-convol}(iii) is concerned,
the fact that the inf-convolution (respectively the sup-convolution)
preserves the supersolution (respectively the subsolution) property
is classical (\cite{barles94, bcd97}). What is more suprising is
the preservation of the subsolution property of the inf-convolution
which comes from the convexity of $H$ and the Theorem of
Barron-Jensen~\ref{thm-bj}.
For a proof, notice first that $U$, being a solution of~\eqref{lc-cauchy},
is a Barron-Jensen solution of~\eqref{lc-cauchy}. We then
apply~\cite[Lemma 3.2]{ley01} using that $H,l$ are independent of $t.$

For Lemma~\ref{barron-jensen}, we refer the reader to~\cite[Theorem 9.2, p.90]{abil13}.


\begin{thebibliography}{10}

\bibitem{abil13}
Yves Achdou, Guy Barles, Hitoshi Ishii, and Grigory~L. Litvinov.
\newblock {\em Hamilton-{J}acobi equations: approximations, numerical analysis
  and applications}, volume 2074 of {\em Lecture Notes in Mathematics}.
\newblock Springer, Heidelberg; Fondazione C.I.M.E., Florence, 2013.
\newblock Lecture Notes from the CIME Summer School held in Cetraro, August
  29--September 3, 2011, Edited by Paola Loreti and Nicoletta Anna Tchou,
  Fondazione CIME/CIME Foundation Subseries.

\bibitem{bcd97}
M.~Bardi and I.~Capuzzo~Dolcetta.
\newblock {\em Optimal control and viscosity solutions of
  {H}amilton-{J}acobi-{B}ellman equations}.
\newblock Birkh\"auser Boston Inc., Boston, MA, 1997.

\bibitem{barles94}
G.~Barles.
\newblock {\em Solutions de viscosit\'e des \'equations de
  {H}amilton-{J}acobi}.
\newblock Springer-Verlag, Paris, 1994.

\bibitem{br06}
G.~Barles and J.-M. Roquejoffre.
\newblock Ergodic type problems and large time behaviour of unbounded solutions
  of {H}amilton-{J}acobi equations.
\newblock {\em Comm. Partial Differential Equations}, 31(7-9):1209--1225, 2006.

\bibitem{bs00}
G.~Barles and P.~E. Souganidis.
\newblock On the large time behavior of solutions of {H}amilton-{J}acobi
  equations.
\newblock {\em SIAM J. Math. Anal.}, 31(4):925--939 (electronic), 2000.

\bibitem{bim13}
Guy Barles, Hitoshi Ishii, and Hiroyoshi Mitake.
\newblock A new {PDE} approach to the large time asymptotics of solutions of
  {H}amilton-{J}acobi equations.
\newblock {\em Bull. Math. Sci.}, 3(3):363--388, 2013.

\bibitem{bm16}
Guy Barles and Joao Meireles.
\newblock On unbounded solutions of ergodic problems in {$\Bbb{R}^m$} for
  viscous {H}amilton-{J}acobi equations.
\newblock {\em Comm. Partial Differential Equations}, 41(12):1985--2003, 2016.

\bibitem{bj90}
E.~N. Barron and R.~Jensen.
\newblock Semicontinuous viscosity solutions of {H}amilton-{J}acobi equations
  with convex hamiltonians.
\newblock {\em Comm. Partial Differential Equations}, 15(12):1713--1740, 1990.

\bibitem{cil92}
M.~G. Crandall, H.~Ishii, and P.-L. Lions.
\newblock User's guide to viscosity solutions of second order partial
  differential equations.
\newblock {\em Bull. Amer. Math. Soc. (N.S.)}, 27(1):1--67, 1992.

\bibitem{cl83}
M.~G. Crandall and P.-L. Lions.
\newblock Viscosity solutions of {H}amilton-{J}acobi equations.
\newblock {\em Trans. Amer. Math. Soc.}, 277(1):1--42, 1983.

\bibitem{dalio02}
F.~Da~Lio.
\newblock Comparison results for quasilinear equations in annular domains and
  applications.
\newblock {\em Comm. Partial Differential Equations}, 27(1-2):283--323, 2002.

\bibitem{ds06}
A.~Davini and A.~Siconolfi.
\newblock A generalized dynamical approach to the large time behavior of
  solutions of {H}amilton-{J}acobi equations.
\newblock {\em SIAM J. Math. Anal.}, 38(2):478--502 (electronic), 2006.

\bibitem{fathi98}
A.~Fathi.
\newblock Sur la convergence du semi-groupe de {L}ax-{O}leinik.
\newblock {\em C. R. Acad. Sci. Paris S\'er. I Math.}, 327(3):267--270, 1998.

\bibitem{fs04}
A.~Fathi and A.~Siconolfi.
\newblock Existence of {$C^1$} critical subsolutions of the {H}amilton-{J}acobi
  equation.
\newblock {\em Invent. Math.}, 155(2):363--388, 2004.

\bibitem{fs05}
A.~Fathi and A.~Siconolfi.
\newblock P{DE} aspects of {A}ubry-{M}ather theory for quasiconvex
  {H}amiltonians.
\newblock {\em Calc. Var. Partial Differential Equations}, 22(2):185--228,
  2005.

\bibitem{fm07}
Albert Fathi and Ezequiel Maderna.
\newblock Weak {KAM} theorem on non compact manifolds.
\newblock {\em NoDEA Nonlinear Differential Equations Appl.}, 14(1-2):1--27,
  2007.

\bibitem{ii08}
Naoyuki Ichihara and Hitoshi Ishii.
\newblock The large-time behavior of solutions of {H}amilton-{J}acobi equations
  on the real line.
\newblock {\em Methods Appl. Anal.}, 15(2):223--242, 2008.

\bibitem{ii09}
Naoyuki Ichihara and Hitoshi Ishii.
\newblock Long-time behavior of solutions of {H}amilton-{J}acobi equations with
  convex and coercive {H}amiltonians.
\newblock {\em Arch. Ration. Mech. Anal.}, 194(2):383--419, 2009.

\bibitem{ishii84}
H.~Ishii.
\newblock Uniqueness of unbounded viscosity solution of {H}amilton-{J}acobi
  equations.
\newblock {\em Indiana Univ. Math. J.}, 33(5):721--748, 1984.

\bibitem{ishii87a}
H.~Ishii.
\newblock A simple, direct proof of uniqueness for solutions of the
  {H}amilton-{J}acobi equations of eikonal type.
\newblock {\em Proc. Amer. Math. Soc.}, 100(2):247--251, 1987.

\bibitem{ishii08}
H.~Ishii.
\newblock Asymptotic solutions for large time of {H}amilton-{J}acobi equations
  in {E}uclidean {$n$} space.
\newblock {\em Ann. Inst. H. Poincar\'e Anal. Non Lin\'eaire}, 25(2):231--266,
  2008.

\bibitem{ishii09}
H.~Ishii.
\newblock Asymptotic solutions of {H}amilton-{J}acobi equations for large time
  and related topics.
\newblock In {\em I{CIAM} 07---6th {I}nternational {C}ongress on {I}ndustrial
  and {A}pplied {M}athematics}, pages 193--217. Eur. Math. Soc., Z\"urich,
  2009.

\bibitem{ley01}
O.~Ley.
\newblock Lower-bound gradient estimates for first-order {H}amilton-{J}acobi
  equations and applications to the regularity of propagating fronts.
\newblock {\em Adv. Differential Equations}, 6(5):547--576, 2001.

\bibitem{lpv86}
P.-L. Lions, B.~Papanicolaou, and S.~R.~S. Varadhan.
\newblock Homogenization of {H}amilton-{J}acobi equations.
\newblock {\em Unpublished}, 1986.

\bibitem{nr99}
G.~Namah and J.-M. Roquejoffre.
\newblock Remarks on the long time behaviour of the solutions of
  {H}amilton-{J}acobi equations.
\newblock {\em Comm. Partial Differential Equations}, 24(5-6):883--893, 1999.

\end{thebibliography}


\end{document}